\theoremstyle{remark}
\numberwithin{equation}{section}
\theoremstyle{definition}
\newtheorem{theorem}{Theorem}[section]
\newtheorem{definition}[theorem]{Definition}
\newtheorem{proposition}[theorem]{Proposition}
\newtheorem{lemma}[theorem]{Lemma}
\newtheorem{corollary}[theorem]{Corollary}
\newtheorem{remark}[theorem]{Remark}
\newtheorem{example}[theorem]{Example}
\newtheorem*{thm}{Theorem}
\newcommand{\ra}{\rightarrow}
\newcommand{\ba}{\mathbf{a}}
\newcommand{\bb}{\mathbf{b}}
\newcommand{\bc}{\mathbf{c}}
\newcommand{\bd}{\mathbf{d}}
\newcommand{\be}{\mathbf{e}}
\newcommand{\bN}{\mathbb{N}}
\newcommand{\ddeg}{{\operatorname*{deg}}}
\newcommand{\trop}{{\operatorname*{Trop}}}
\newcommand{\Gr}{{\operatorname*{Gr}}}
\newcommand{\inn}{\operatorname*{in}}
\begin{document}
\title[Toric degenerations of $\Gr(2,n)$ and $\Gr(3,6)$ via plabic graphs]{
Toric degenerations of $\Gr(2,n)$ and $\Gr(3,6)$ via plabic graphs}
\author{L. Bossinger}
\address{L. Bossinger: University of Cologne, Mathematical Institute, Weyertal 86--90, 50931 Cologne, Germany}
\email{lbossing@math.uni-koeln.de}
\author{X. Fang}
\address{X. Fang: University of Cologne, Mathematical Institute, Weyertal 86--90, 50931 Cologne, Germany}
\email{xinfang.math@gmail.com}
\author{G. Fourier}
\address{G. Fourier: Leibniz Universit\"at Hannover, Institute for Algebra, Number Theory and Discrete Mathematics, Welfengarten 1, 30167 Hannover, Germany
}
\email{fourier@math.uni-hannover.de}
\author{M. Hering}
\address{M. Hering: School of Mathematics, The University of Edinburgh,
James Clerk Maxwell Building, Edinburgh, EH9 3JZ, UK}
\email{m.hering@ed.ac.uk}
\author{M. Lanini}
\address{M. Lanini: Universit\`a degli Studi di Roma ``Tor Vergata", Dipartimento di Matematica, Via della Ricerca Scientifica 1, I-00133 Rome, Italy  }
\email{lanini@mat.uniroma2.it}

\begin{abstract}
We establish an explicit bijection between the toric degenerations of the Grassmannian $\Gr(2,n)$ arising from maximal cones in tropical Grassmannians and the ones coming from plabic graphs corresponding to $\Gr(2,n)$. We show that a similar statement does not hold for $\Gr(3,6)$.
\end{abstract}

\maketitle

\section{Introduction}
Toric degenerations provide a useful tool to study algebraic varieties. In \cite{RW}, Rietsch and Williams associate a polytope to a plabic (plane bicolored) graph whose associated toric variety provides in many cases a toric degeneration of a Grassmannian. The goal of this paper is comparing these toric degenerations to the toric degenerations corresponding to maximal cones of the tropical Grassmannian in the case $\Gr(2,n)$ and $\Gr(3,6)$.
\par
Consider the Grassmannian $\Gr(k,n)$ embedded in the projective space $\mathbb{P}(\bigwedge^k \mathbb{C}^n)$ via
the Pl{\"u}cker embedding.  The tropical Grassmannian $\trop(\Gr(k,n))$ parametrizes
initial ideals of the Pl{\"u}cker ideal $I_{k,n}$
that are monomial free.
More precisely, it is the subfan of the Gr{\"o}bner fan consisting of
weight vectors $\mathbf{w}$ such that the initial ideal of the Pl{\"u}cker ideal
with respect to $\mathbf{w}$ does not contain a monomial. Speyer and Sturmfels  \cite{SS} uncovered a beautiful combinatorial structure of the
tropical Grassmannian $\trop(\Gr(2,n))$.
In particular, they show that the maximal cones of $\trop(\Gr(2,n))$ are parametrized by labelled trivalent trees on $n$ leaves. A similar such description is not known for $\trop(\Gr(k,n))$ in general.
\par
However, some points on the tropical Grassmannian can be understood
through cluster algebra techniques.
Inspired by  \cite{RW} we associate to a plabic graph a
 weight vector using Postnikov's flow model \cite{Pos}, see
Definition \ref{def:PD}. We show that for $\Gr(2,n)$ and $\Gr(3,6)$, these weight vectors lie in the tropical Grassmannian. It would be interesting to see whether weight vectors associated to a plabic graph lie in the tropical Grassmannian in general.

\begin{thm}
Consider the bijection of labelled trivalent trees on $n$ leaves with
plabic graphs corresponding to $\Gr(2,n)$ via
labelled triangulations of the $n$-gon, as constructed in \cite{KW}.
Under this bijection, the associated initial ideals coincide.
\end{thm}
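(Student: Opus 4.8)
The plan is to show that the weight vector $\mathbf{w}_G$ of Definition~\ref{def:PD} attached to the plabic graph $G$ lies in the relative interior of the maximal cone of $\trop(\Gr(2,n))$ indexed by the corresponding labelled trivalent tree $T$; the equality with the Speyer--Sturmfels initial ideal then follows at once. Recall from \cite{SS} that $I_{2,n}$ is generated by the three-term Pl\"ucker relations $p_{ik}p_{jl}-p_{ij}p_{kl}-p_{il}p_{jk}$ (for $i<j<k<l$), that the maximal cones $\sigma_T$ of $\trop(\Gr(2,n))$ are indexed by labelled trivalent trees $T$ on $n$ leaves, with $\mathrm{relint}(\sigma_T)$ parametrizing the tree metrics on $\{1,\dots,n\}$ realized by $T$ with strictly positive internal edge lengths, and that for such $\mathbf{w}$ the initial ideal $\inn_{\mathbf{w}}(I_{2,n})$ equals the toric ideal $I_T$ generated by one binomial per quartet, obtained from the three-term relation on $\{i,j,k,l\}$ by deleting the monomial $p_{ab}p_{cd}$ where $\{a,b\}\mid\{c,d\}$ is the split of $\{i,j,k,l\}$ induced by $T$. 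Equivalently, $\mathbf{w}\in\mathrm{relint}(\sigma_T)$ iff, modulo the lineality space of $I_{2,n}$ and up to sign, $\mathbf{w}$ is such a tree metric --- i.e., iff for every quartet the extremal member of $\{w_{ik}+w_{jl},\,w_{ij}+w_{kl},\,w_{il}+w_{jk}\}$ is attained exactly by the two pairings compatible with the split induced by $T$ and strictly beats the third. So it suffices to verify that $\mathbf{w}_G$ has this property.

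For this I would first make the \cite{KW} bijection explicit: $T$ is the planar dual of a triangulation $\mathcal{T}$ of the $n$-gon (internal vertices $\leftrightarrow$ triangles, internal edges $\leftrightarrow$ diagonals, leaves $\leftrightarrow$ sides, with the induced labelling of the leaves by $\{1,\dots,n\}$ as in \cite{KW}), and $G=G_{\mathcal{T}}$ is the associated plabic graph, whose bounded faces carry exactly the $2$-subsets that are sides or diagonals of $\mathcal{T}$. I would then evaluate the degree function of Definition~\ref{def:PD}: after fixing a perfect orientation of $G$, each $w^G_{ij}$ is read off from Postnikov's flows \cite{Pos} onto $\{i,j\}$. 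The key point to establish is that $\mathbf{w}_G$ is, up to sign and modulo the lineality of $I_{2,n}$, a tree metric for $T$ with strictly positive internal edge lengths --- I expect it to be precisely the metric assigning length $1$ to every internal edge and $0$ to every leaf edge, though only positivity on the internal edges is needed for the conclusion. The most economical route seems to be induction along mutations: a diagonal flip in $\mathcal{T}$ is a square move on $G$ and corresponds to a tropicalized three-term Pl\"ucker relation, and one checks that the asserted description of $\mathbf{w}_G$ is preserved, anchoring the induction at a ``fan'' triangulation, where $G$ is the standard $\Gr(2,n)$ plabic graph and $\mathbf{w}_G$ can be computed by hand.

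Granting this, the four-point conditions are immediate: for a quartet $\{i,j,k,l\}$ the extremal pairwise sum of $w^G$-values is attained by the two pairings compatible with the split of $\{i,j,k,l\}$ induced by $T$, the gap to the third pairing being strict since it is witnessed by an internal edge of $T$ (a diagonal of $\mathcal{T}$) separating the two leaves of the quartet not lying on a common side of it. Hence $\mathbf{w}_G\in\mathrm{relint}(\sigma_T)$, and therefore $\inn_{\mathbf{w}_G}(I_{2,n})=I_T=\inn_{\mathbf{w}_T}(I_{2,n})$, which is the assertion. As a byproduct this re-establishes, for $\Gr(2,n)$, that the weight vectors coming from plabic graphs lie in the tropical Grassmannian.

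The main obstacle is the middle step, the explicit evaluation of $\mathbf{w}_G$. Postnikov's flow model is set up relative to a choice of perfect orientation, so one must first check that the resulting degree function is independent of that choice modulo lineality, and then compute it for the $2$-subsets $\{i,j\}$ that are neither a side nor a diagonal of $\mathcal{T}$, where several flows through $G_{\mathcal{T}}$ contribute; doing this head-on is fiddly. The mutation/square-move induction bypasses most of the bookkeeping, but it in turn requires showing that Definition~\ref{def:PD} transforms predictably under square moves (i.e. along a tropical Pl\"ucker relation), which is where the real work lies. Once that compatibility is in place, the first and third steps are routine.
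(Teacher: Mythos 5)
Your endgame is the same as the paper's: both arguments reduce the theorem to checking that for every quartet $i<j<k<l$ the plabic weight vector satisfies the four‑point condition with the same distinguished pairing as the tree metric attached to $T_\Delta$, and then invoke the Speyer--Sturmfels/Maclagan--Sturmfels description of $\trop(\Gr(2,n))$ to conclude that both initial ideals are generated by the corresponding trimmed three‑term Pl\"ucker binomials; this is exactly the proof given in Section~\ref{sec:maintheorem}, citing \cite[Lemma 4.3.6, Theorem 4.3.5]{MS}. Where you genuinely diverge is in how the quartet data of $\mathbf{w}_{\mathcal{G}_\Delta}$ is established. The paper proves a closed combinatorial formula for the plabic degree in terms of connection numbers of the triangulation (the X‑degree), by induction on $n$: it cuts an ear off the triangulation at a boundary black vertex and runs a thirteen‑case comparison of flows before and after the cut, and then verifies the quartet inequalities algebraically from the formula (Proposition~\ref{x-prop}). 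You propose instead an induction along diagonal flips/square moves anchored at a fan triangulation. That route is plausible --- the paper's Section 6 asserts, without full proof, the compatibility of plabic‑degree mutation with tree mutation --- but the transformation rule for Definition~\ref{def:PD} under a square move is precisely where all the content of the theorem lives, and you have not supplied it. As written, the proposal correctly sets up the reduction and then defers the one nontrivial step, so it is a plan rather than a proof.

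Two factual corrections to the plan itself. First, your conjectured identification of $-\mathbf{w}_{\mathcal{G}_\Delta}$ (modulo lineality) with the tree metric assigning length $1$ to every internal edge is false: already for the fan triangulation of the pentagon at vertex $2$ one computes that $-\mathbf{w}_{\mathcal{G}_\Delta}$ is, modulo lineality, the tree metric with all internal edges of length $\tfrac12$; in general, Propositions~\ref{a-prop} and \ref{x-prop} show that each quartet gap for $-\mathbf{w}_{\mathcal{G}_\Delta}$ is the relevant connection number $C_{i,j-1}^{k,l-1}$ (resp.\ $C_{j,k-1}^{l,i-1}$), while for the tree degree it is twice that, so $-\mathbf{w}_{\mathcal{G}_\Delta}\equiv\tfrac12\mathbf{w}_{T_\Delta}$ modulo lineality. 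This is harmless, since as you note only strict positivity of the internal lengths is needed, but it means the statement you propagate through the square‑move induction must be the weaker one. Second, independence of the plabic degree from the choice of perfect orientation holds on the nose, not merely modulo lineality; this is \cite[Remark 3.9]{RW} and can be discharged by citation.
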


In Section \ref{Gr(3,6)}, we see that
a similar statement does not necessarily hold for $k\geq 3$. Indeed, comparing
the weight vectors we get from the plabic graphs for $\Gr(3,6)$ with
$\trop(\Gr(3,6))$ as studied in \cite{SS}, we see that some maximal cones of
$\trop(\Gr(3,6))$ do not contain any weight vector associated to a plabic graph.
From the cluster algebra point of view, this is not surprising, as for $\Gr(2,n)$, all cluster seeds can be obtained
via plabic graphs, which is not  necessarily the case when $k\geq 3$ \cite{Sco}. Note that $\trop(\Gr(3,6))$ cannot even be fully described using all cluster seeds, see
\cite{BCL}.
\\

\noindent
\textbf{Acknowledgements.} The main ideas of this paper were developed during the MFO Mini--workshop ``PBW-structures in Representation theory", where all authors enjoyed the hospitality.
We would like to thank Peter Littelmann, Diane Maclagan, Markus Reineke, Kristin Shaw and Bernd Sturmfels for helpful discussions. M.H. would like to thank Daniel Erman, Claudiu Raicu, and Greg Smith for support with Macaulay 2.
The work of X.F. was supported by the Alexander von Humboldt foundation.
The work of M.H. was partially supported by EPSRC first grant EP/K041002/1.

\section{The (tropical) Grassmannian }\label{grassmann}

Let $n\geq 1$ be an integer. For $1\leq k\leq n$, the Grassmannian $\Gr(k,n)$ is the set of $k$-dimensional subspaces in $\mathbb{C}^n$.
The projective variety structure on $\Gr(k,n)$ is given by the Pl\"ucker embedding $\Gr(k,n)\ra\mathbb{P}(\Lambda^k\mathbb{C}^n)$, sending a $k$-dimensional subspace $\operatorname{span}\{v_1,\ldots,v_k\}\subset\mathbb{C}^n$ to the point $[v_1\wedge\ldots\wedge v_k]\in\mathbb{P}(\Lambda^k\mathbb{C}^n)$.
\par
Let $e_1,\ldots,e_n$ be the standard basis of $\mathbb{C}^n$. For $I=\{i_1,\ldots,i_k\}$ with $1\leq i_1<\ldots<i_k\leq n$, let $P_I\in(\Lambda^k\mathbb{C}^n)^*$ denote the dual basis of $e_{i_1}\wedge\ldots\wedge e_{i_k}$. These $P_I$ are called Pl\"ucker coordinates of $\Gr(k,n)$. The defining ideal $I_{k,n}$ of $\Gr(k,n)$ in $\mathbb{P}(\Lambda^k\mathbb{C}^n)$ is generated by the Pl\"ucker relations, see for example \cite[Definition 5.2.2]{LB} for details.
\par
When $k=2$, the Pl\"ucker coordinates are $\{P_{ij}\mid 1\leq i<j\leq n\}$. The defining ideal of $\Gr(2,n)$ is generated by the following Pl\"ucker relations: for $1\leq i<j<k<l\leq n$,
$$P_{ij}P_{kl}-P_{ik}P_{jl}+P_{il}P_{jk}=0.$$

For a polynomial $f = \sum a_\mathbf{u}x^\mathbf{u}\in \mathbb{C}[x_1, \ldots, x_N]$, where
$\mathbf{u}\in \bN^N$,  and a weight vector $\mathbf{w}\in \mathbb{R}^N$,
we let $\inn_\mathbf{w}(f) = \sum a_\mathbf{u}x^\mathbf{u}$, where the sum is over those $\mathbf{u}$ for which $\langle \mathbf{u},\mathbf{w} \rangle\in\mathbb{R}$ is minimal. For an ideal $I\subset \mathbb{C}[x_1, \ldots, x_N]$,
we let $\inn_\mathbf{w}(I) = \{ \inn_\mathbf{w}(f) \mid f \in I\}$. By \cite[Theorem 15.17]{Eis},
there exists a flat family over $\mathbb{A}^1$ whose fiber over $t\neq 0$ is isomorphic
to $V(I)$ and whose fiber over $t=0$ is isomorphic to $V(\inn_\mathbf{w}(I))$.

The tropical Grassmannian $\trop(\Gr(k,n))$ is the set of weight vectors $\mathbf{w}\in
\mathbb{R}^{\binom{n}{k}}$ such that $\inn_\mathbf{w} (I_{k,n})$ does not contain a monomial.
It is a fan of dimension $k(n-k)$. See \cite{SS} and \cite{MS} for more information.

\section{Plabic graphs}

We will review the definition of plabic graphs due to Postnikov \cite{Pos}. This section is closely oriented towards Rietsch and Williams \cite{RW}.

\begin{definition}
A \textit{plabic graph} $\mathcal{G}$ is a planar bicolored graph embedded in a disk. It has $n$ boundary vertices numbered $1,\dots, n$ in a counterclockwise order. Boundary vertices lie on the boundary of the disk and are not colored. Additionally there are internal vertices colored black or white. Each boundary vertex is adjacent to a single internal vertex.
\end{definition}

\begin{figure}[H]
\begin{center}
\begin{tikzpicture}[scale=.7]
    \draw (-1,1.5) -- (-.5,1) -- (.5,1) -- (1,1.5);
    \draw (-.5,1) -- (-1,0) -- (-2,-.5);
    \draw (-1,0) -- (-.5,-1) -- (-.5,-1) -- (-.5,-2);
    \draw (-.5,-1) -- (1,-.5) -- (1.75,-.5);
    \draw (1,-.5) -- (0,0) -- (-1,0);
    \draw (0,0) -- (.5,1);

    \draw (-1,1.5) to [out=25,in=155] (1,1.5)
    to [out=-45,in=95] (1.75,-0.5) to [out=-95,in=0] (-.5,-2) to [out=180,in=-80] (-2,-.5) to [out=90,in=-145] (-1,1.5);

    \node[above] at (-1,1.5) {1};
    \node[above, right] at (1,1.5) {5};
    \node[right] at (1.75,-.5) {4};
    \node[below] at (-.5,-2) {3};
    \node[left] at (-2,-.5) {2};

    \draw[fill] (-.5,1) circle [radius=0.1];
    \draw[fill] (0,0) circle [radius=0.1];
    \draw[fill] (-.5,-1) circle [radius=0.1];
    \draw[fill, white] (.5,1) circle [radius=.115];
        \draw (.5,1) circle [radius=.115];
    \draw[fill, white] (1,-.5) circle [radius=.115];
        \draw (1,-.5) circle [radius=.115];
    \draw[fill, white] (-1,0) circle [radius=.115];
        \draw (-1,0) circle [radius=.115];
\end{tikzpicture}
\end{center}
\label{ExamplePlabic}
\caption{A plabic graph.}
\end{figure}
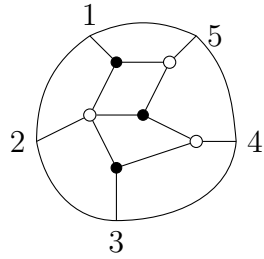

For our purposes we will assume that plabic graphs are connected and that every leaf of a plabic graph is a boundary vertex. We first introduce four local moves on plabic graphs.

\begin{itemize}
\item[(M1)] If a plabic graph contains a square of four internal vertices of opposite colors, each of which is trivalent, then the colors can be swapped. So every black vertex in the square becomes white and every white vertex becomes black.
    \begin{center}
    \begin{figure}[H]
    \begin{tikzpicture}[scale=0.7]
    \draw (0,0) -- (1,1) -- (2,1) -- (3,0);
    \draw (0,3) -- (1,2) -- (2,2) -- (3,3);
    \draw (1,2) -- (1,1);
    \draw (2,2) -- (2,1);

    \draw[->] (3.5,1.5) -- (4.5,1.5);
    \draw[->] (4.5,1.5) -- (3.5,1.5);

    \draw (5,0) -- (6,1) -- (7,1) -- (8,0);
    \draw (5,3) -- (6,2) -- (7,2) -- (8,3);
    \draw (6,2) -- (6,1);
    \draw (7,2) -- (7,1);

    \draw[fill] (1,1) circle [radius=0.1];
    \draw[fill] (2,2) circle [radius=0.1];
    \draw[fill, white] (1,2) circle [radius=0.1];
    \draw[fill, white] (2,1) circle [radius=0.1];
    \draw (1,2) circle [radius=0.1];
    \draw (2,1) circle [radius=0.1];

    \draw[fill] (6,2) circle [radius=0.1];
    \draw[fill] (7,1) circle [radius=0.1];
    \draw[fill, white] (6,1) circle [radius=0.1];
    \draw[fill, white] (7,2) circle [radius=0.1];
    \draw (6,1) circle [radius=0.1];
    \draw (7,2) circle [radius=0.1];
    \end{tikzpicture}
    \label{M1}
    \caption{Square move}
    \end{figure}
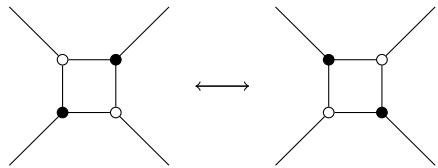
    \end{center}
\item[(M2)] If two internal vertices of the same color are adjacent by an edge, the edge can be contracted and the two vertices can be merged. Conversely, any internal black or white vertex can be split into two adjacent vertices of the same color.
    \begin{center}
    \begin{figure}[H]
    \begin{tikzpicture}[scale=0.7]
    \draw (0,0.5) -- (1,1) -- (0,1.5);
    \draw (1,1) -- (2,1) -- (3,2);
    \draw (3,1) -- (2,1) -- (3,0);

    \draw[->] (3.5,1) -- (4.5,1);
    \draw[->] (4.5,1) -- (3.5,1);

    \draw (5,0.5) -- (6,1) -- (5,1.5);
    \draw (7,2) -- (6,1) -- (7,1);
    \draw (6,1) -- (7,0);

    \draw[fill] (6,1) circle [radius=0.1];
    \draw[fill] (1,1) circle [radius=0.1];
    \draw[fill] (2,1) circle [radius=0.1];
    \end{tikzpicture}
    \label{M2}
    \caption{Merge vertices of same color}
    \end{figure}
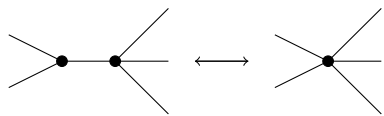
    \end{center}
\item[(M3)] If a plabic graph contains an internal vertex of degree $2$, it can be removed. Equivalently, an internal black or white vertex can be inserted in the middle of any edge.
    \begin{center}
    \begin{figure}[H]
    \begin{tikzpicture}[scale=0.7]
    \draw (0,0) -- (1,0) -- (2,0);

    \draw[->] (2.5,0) -- (3.5,0);
    \draw[->] (3.5,0) -- (2.5,0);

    \draw (4,0) -- (6,0);

    \draw[fill, white] (1,0) circle [radius=0.1];
    \draw (1,0) circle [radius=0.1];
    \end{tikzpicture}
    \label{M3}
    \caption{Insert/remove degree two vertex}
    \end{figure}
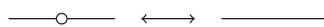
    \end{center}
\item[(R)] If two internal vertices of opposite color are connected by two parallel edges, they can be reduced to only one edge. This can not be done conversely.
    \begin{center}
    \begin{figure}[H]
    \begin{tikzpicture}[scale=0.7]
    \draw (0,0) -- (1,0);
    \draw (2,0) -- (3,0);

    \draw (1,0.06) -- (2,0.06);
    \draw (1,-.06) -- (2,-.06);

    \draw[fill] (1,0) circle [radius=0.1];
    \draw[fill, white] (2,0) circle [radius=0.1];
    \draw (2,0) circle [radius=0.1];

    \draw[->] (3.5,0) -- (4.5,0);

    \draw (5,0) -- (7,0);
    \end{tikzpicture}
    \label{R}
    \caption{Reducing parallel edges}
    \end{figure}
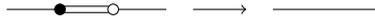
    \end{center}
\end{itemize}
We define the \textit{equivalence class} of a plabic graph $\mathcal G$ to be the set of all plabic graphs that can be obtained from $\mathcal G$ by applying (M1)-(M3). If in the equivalence class there is no graph to which (R) can be applied, we say $\mathcal G$ is \textit{reduced}. From now on we will only consider reduced plabic graphs.
\begin{definition}
Let $\mathcal G$ be a reduced plabic graph with boundary vertices $v_1,\dots, v_n$ labelled in a counterclockwise order. We define the \textit{trip permutation} $\pi_{\mathcal G}$ as follows. We start at a boundary vertex $v_i$ and form a path along the edges of $\mathcal G$ by turning maximally right at an internal black vertex and maximally left at an internal white vertex. We end up at a boundary vertex $v_{\pi(i)}$ and define $\pi_{\mathcal G}=(\pi(1),\dots,\pi(n))$.
\end{definition}
Note that plabic graphs in one equivalence class have the same trip permutation. Further, it was proven by Postnikov in \cite[Theorem 13.4]{Pos} that plabic graphs with the same trip permutation are connected by moves (M1)-(M3) and are therefore equivalent.
Let $\pi_{k,n}=(n-k+1,n-k+2,\dots,n,1,2,\dots,n-k)$. From now on we will focus on plabic graphs $\mathcal G$ with trip permutation $\pi_{\mathcal G}=\pi_{k,n}$. Each path $v_i$ to $v_{\pi_{k,n}(i)}$ defined above, divides the disk into two regions. We label every face in the region to the left of the path by $i$. After repeating this for every $1\le i\le n$, all faces have a labelling by an $(n-k)$-element subset of $\{1,\dots,n\}$. We denote by $\mathcal{P_G}$ the set of all such subsets for a fixed plabic graph $\mathcal G$.
\par
A face of a plabic graph is called \emph{internal}, if it does not intersect with the boundary of the disk. Other faces are called \emph{boundary faces}.
\par
Following \cite{RW} it is necessary to define an orientation on a plabic graph. This is the first step in establishing the flow model, which associates to each plabic graph a polytope. We will use these combinatorics to define plabic degrees on the Pl\"ucker coordinates.
\begin{definition}
An orientation $\mathcal O$ of a plabic graph $\mathcal G$ is called \textit{perfect}, if every internal white vertex has exactly one incoming arrow and every internal black vertex has exactly one outgoing arrow. The set of boundary vertices that are sources is called \textit{source set} and is denoted by $I_{\mathcal O}$.
\end{definition}

Postnikov showed in \cite{Pos} that every reduced plabic graph with trip permutation $\pi_{k,n}$ has a source set of order $n-k$. See Figure \ref{ExamplePerfectOrientation} for a plabic graph with trip permutation $\pi_{3,5}$.

\begin{center}
\begin{figure}[H]
\begin{tikzpicture}[scale=.7]
    \draw (-1,1.5) -- (-.5,1) -- (.5,1) -- (1,1.5);
    \draw (-.5,1) -- (-1,0) -- (-2,-.5);
    \draw (-1,0) -- (-.5,-1) -- (-.5,-1) -- (-.5,-2);
    \draw (-.5,-1) -- (1,-.5) -- (1.75,-.5);
    \draw (1,-.5) -- (0,0) -- (-1,0);
    \draw (0,0) -- (.5,1);

    \draw[->] (-1,1.5) -- (-.75,1.25);
    \draw[->] (-2,-.5) --(-1.5,-.25);
    \draw[->] (-1,0) -- (-.75,.5);
    \draw[->] (-1,0) -- (-.5,0);
    \draw[->] (-1,0) -- (-.75,-0.5);
    \draw[->] (-.5,-1) -- (-.5,-1.5);
    \draw[->] (1,-.5) -- (.25,-0.75);
    \draw[->] (1,-0.5) -- (1.5,-0.5);
    \draw[->] (0,0) -- (.5,-0.25);
    \draw[->] (0.5,1) -- (.25,0.5);
    \draw[->] (.5,1) -- (.75,1.25);
    \draw[->] (-.5,1) -- (0,1);

    \draw (-1,1.5) to [out=25,in=155] (1,1.5)
    to [out=-45,in=95] (1.75,-0.5) to [out=-95,in=0] (-.5,-2) to [out=180,in=-80] (-2,-.5) to [out=90,in=-145] (-1,1.5);

    \node[above] at (-1,1.5) {1};
    \node[above, right] at (1,1.5) {5};
    \node[right] at (1.75,-.5) {4};
    \node[below] at (-.5,-2) {3};
    \node[left] at (-2,-.5) {2};

    \node at (0,1.3) {\tiny{$12$}};
    \node at (-.25,.5) {\tiny{$13$}};
    \node at (1,.5) {\tiny{$15$}};
    \node at (0.5,-1.25) {\tiny{$45$}};
    \node at (-.25,-.5) {\tiny{$35$}};
    \node at (-1,-1) {\tiny{$34$}};
    \node at (-1.25,.75) {\tiny{$23$}};

    \draw[fill] (-.5,1) circle [radius=0.1];
    \draw[fill] (0,0) circle [radius=0.1];
    \draw[fill] (-.5,-1) circle [radius=0.1];
    \draw[fill, white] (.5,1) circle [radius=.115];
        \draw (.5,1) circle [radius=.115];
    \draw[fill, white] (1,-.5) circle [radius=.115];
        \draw (1,-.5) circle [radius=.115];
    \draw[fill, white] (-1,0) circle [radius=.115];
        \draw (-1,0) circle [radius=.115];
\end{tikzpicture}
\caption{A plabic graph with a perfect orientation and source set $\{1,2\}$.}
\label{ExamplePerfectOrientation}
\end{figure}
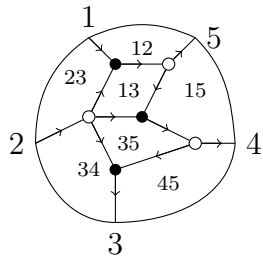
\end{center}

Given a perfect orientation $\mathcal O$ on $\mathcal G$, every directed path from a boundary vertex in the source set to a boundary vertex that is a sink, divides the disk in two parts. The \emph{degree} of such a directed path is defined to be the number of internal faces to the left of the path. For a set of boundary vertices $J$ with $\mid J\mid =\mid I_{\mathcal O}\mid$, we define a \textit{$J$-flow} to be a collection of self-avoiding, vertex disjoint directed paths with sources $I_{\mathcal O}-(J\cap I_{\mathcal O})$ and sinks $J-(J\cap I_{\mathcal O})$. The \emph{degree} of a $J$-flow is the sum of the degrees of its paths. Let $\mathcal{F}_J$ be the set of all $J$-flows.

\begin{definition}\label{def:PD}
For a subset $J=\{i_1<\ldots<i_k\}$ of $\{1,2,\ldots,n\}$, and a plabic graph $\mathcal{G}$,  the plabic degree of the Pl\"ucker coordinate $P_J$, denoted by $\ddeg_{\mathcal{G}}(P_J)$, is defined to be the minimum of degrees of flows in $\mathcal{F}_J$.
\end{definition}

Note that this gives rise to a weight vector for the homogeneous coordinate 
ring of the Grassmannian in the Pl{\"u}cker embedding.

\begin{remark}
By \cite[Remark 3.9]{RW}, the plabic degree is independent of the choice of the perfect orientation.
\end{remark}

Since the boundary faces are the same for all $J$-flows, the following proposition is due to \cite[Remark 6.4]{RW}.

\begin{proposition}
There is a unique $J$-flow with degree equals to $\operatorname{deg}_P P_J$.
\end{proposition}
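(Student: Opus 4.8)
The plan is to reduce the statement to the minimality result of \cite[Remark 6.4]{RW} by trading the ``internal'' degree used here for the total degree, i.e.\ for the full face datum carried by a flow.

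First I would fix a perfect orientation $\mathcal O$ of $\mathcal G$; by the preceding remark this changes neither $\deg_{\mathcal G}(P_J)$ nor the collection of $J$-flows attaining it. For a $J$-flow $F$ and a face $f$ of $\mathcal G$, put $m_f(F)=\#\{\,p\in F:f\text{ lies to the left of }p\,\}$; then $\deg(F)=\sum_{f\text{ internal}}m_f(F)$ by definition, and $(m_f(F))_f$ is the face vector that \cite{RW} attach to $F$ in their flow model. Splitting the sum over all faces into its internal and its boundary part,
\[
  \sum_{f}m_f(F)\;=\;\deg(F)+\sum_{f\text{ boundary}}m_f(F),
\]
the second summand is independent of the choice of $F\in\mathcal F_J$: this is precisely the observation ``the boundary faces are the same for all $J$-flows'' recorded just before the proposition. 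Writing $c_J$ for that constant we obtain $\sum_f m_f(F)=\deg(F)+c_J$ for every $F\in\mathcal F_J$.

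Consequently a $J$-flow minimizes $\deg(\cdot)$ over $\mathcal F_J$ if and only if it minimizes the total degree $\sum_f m_f(\cdot)$, and the uniqueness claimed in the proposition is equivalent to uniqueness of the total-degree minimizer. The latter I would take from \cite[Remark 6.4]{RW}: among all $J$-flows there is a distinguished $F_{\min}$ with $m(F_{\min})\le m(F)$ coordinatewise for every $F\in\mathcal F_J$, the inequality being strict in at least one coordinate whenever $F\neq F_{\min}$; hence $F_{\min}$ is the unique total-degree minimizer, and therefore the unique $J$-flow with $\deg(F_{\min})=\deg_{\mathcal G}(P_J)$.

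The only input that is not purely formal here is \cite[Remark 6.4]{RW} itself, so that is where the real work lies if one wants a self-contained argument. In that case I would replace it by an exchange argument in the spirit of Lindström--Gessel--Viennot: given two distinct $J$-flows $F\neq F'$, resolve the crossings of the union $F\cup F'$ by swapping tails at a first common vertex, and check that this yields $J$-flows contained in $F\cup F'$ that jointly enclose strictly fewer faces than $F$ and $F'$ together do, contradicting simultaneous minimality. Verifying that the resolved paths stay self-avoiding and vertex disjoint, and that the total enclosed degree strictly drops, is the delicate point.
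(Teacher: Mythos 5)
Your argument is correct and is essentially the paper's own: the paper justifies the proposition in one line by observing that the boundary faces contribute equally to every $J$-flow and then invoking \cite[Remark 6.4]{RW}, which is exactly the reduction you carry out (your version just makes the bookkeeping between internal degree and total face degree explicit). The supplementary exchange-argument sketch goes beyond what the paper provides, but the cited remark is where the paper also places the real content.
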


\begin{example}
Consider the plabic graph $\mathcal{G}$ from Figure \ref{ExamplePerfectOrientation}. The source set is $I_{\mathcal O}=\{1,2\}$. There are two flows from $J=\{1,4\}$ to $I_{\mathcal O}$. One has faces labelled by $\{2,3\}$, $\{1,2\}$ and $\{1,5\}$ to the left and the other has faces $\{2,3\}$, $\{1,3\}$, $\{1,2\}$ and $\{1,5\}$ to the left. The first flow has degree $0$ and the second one has degree $1$, hence the plabic degree of $P_{14}$ is $0$. The plabic degrees of all Pl\"ucker coordinates are listed below:
$$
\ddeg_{\mathcal{G}}(P_{12})=\ddeg_{\mathcal{G}}(P_{13})=\ddeg_{\mathcal{G}}(P_{14})=\ddeg_{\mathcal{G}}(P_{15})=0;
$$
$$
\ddeg_{\mathcal{G}}(P_{23})=\ddeg_{\mathcal{G}}(P_{24})=\ddeg_{\mathcal{G}}(P_{25})=0;
$$
$$
\ddeg_{\mathcal{G}}(P_{34})=2,\ \ddeg_{\mathcal{G}}(P_{35})=\ddeg_{\mathcal{G}}(P_{45})=1.
$$
\end{example}

\section{Degrees and Main Theorem}\label{mainthm}
\subsection{Triangulations of the $n$-gon}
For $n\geq 4$, let $G_n$ be an $n$-gon whose vertices are labelled by $1,2,\ldots,n$ in the counterclockwise order. For $1\leq i,j\leq n$, let $(i,j)$ be the edge or the diagonal connecting the vertices $i$ and $j$.
\par
Let $\Delta=\Delta_e\cup\Delta_d$ be a triangulation of the $n$-gon $G_n$. We define
$$\Delta_d=\{(a_1,b_1),(a_2,b_2),\ldots,(a_{n-3},b_{n-3})\}$$
as the set of diagonals and
$$\Delta_e=\{(1,2),(2,3),\ldots,(n-1,n),(n,1)\}$$
the set of edges.
\par

\subsection{A-degree and tree degree}
A \textit{labelled tree} is a rooted trivalent tree on $n$ leaves with root $1$ and the other leaves labelled counterclockwise with $2,\ldots,n$. Each triangulation $\Delta$ of the $n$-gon $G_n$ gives such a labelled tree $T_\Delta$ in the following way:
\begin{enumerate}
\item Draw a vertex for any triangle of the triangulation (the inner vertices of the tree).
\item For two adjacent triangles, draw an edge connecting the two vertices.
\item For any boundary edge $(i,i+1)$ (mod $n$) of the $n$-gon draw a vertex labelled $i+1$ (the leaves of the tree).
\item Draw an edge connecting the vertex $i$ with the vertex corresponding to the unique triangle having the edge $(i,i+1)$.
\end{enumerate}
We denote the resulting labelled tree by $T_{\Delta}$. See Figure \ref{triangulation:tree} for an example of the algorithm.
\par
For a Pl\"ucker coordinate $P_{ij}$, the tree degree $\ddeg_{T_\Delta}(P_{ij})$ is defined to be the number of internal edges between leaves $i$ and $j$ (internal edges are those connecting trivalent vertices of the tree).
\begin{center}
\begin{figure}[h]
\begin{tikzpicture}[scale=0.4]
\node[below] at (3,1) {4};
\node[below] at (6,1) {5};
\node[right] at (8,3) {6};
\node[right] at (8,5) {7};
\node[above] at (6,7) {8};
\node[above] at (3,7) {1};
\node[left] at (1,5) {2};
\node[left] at (1,3) {3};

\node[below, red] at (4,0) {5};
\node[right, red] at (8,1) {6};
\node[right, red] at (9,4) {7};
\node[right, red] at (9,6) {8};
\node[above, red] at (5,8) {1};
\node[left, red] at (0,7) {2};
\node[left, red] at (0,4) {3};
\node[left, red] at (0,2) {4};

\draw (3,1) --(6,1) -- (8,3) -- (8,5) -- (6,7) -- (3,7) -- (1,5) -- (1,3) -- (3,1);
\draw (6,1) -- (1,3) -- (8,3) -- (3,7) -- (8,5);
\draw (1,3) -- (3,7);
\draw[red, ultra thick] (4,0) -- (3,1.5) -- (0,2);
\draw[red, ultra thick] (3,1.5) -- (5,2) -- (8,1);
\draw[red, ultra thick] (5,2) -- (3.5,4.5) -- (1.5,5) -- (0,4);
\draw[red, ultra thick] (1.5,5) -- (0,7);
\draw[red, ultra thick] (3.5,4.5) -- (6,5) -- (9,4);
\draw[red, ultra thick] (6,5) -- (6,6) -- (9,6);
\draw[red, ultra thick] (6,6) -- (5,8);

\begin{scope}[xshift=10cm, scale=0.6]
\node[above, red] at (10,12) {1};
\node[below, red] at (1,0) {2};
\node[below, red] at (3,0) {3};
\node[below, red] at (4,0) {4};
\node[below, red] at (6,0) {5};
\node[below, red] at (9,0) {6};
\node[below, red] at (12,0) {7};
\node[below, red] at (15,0) {8};

\draw[red, ultra thick] (1,0) -- (2,2) -- (3,0);
\draw[red, ultra thick] (2,2) -- (6,6) -- (7,4) -- (5,2) -- (4,0);
\draw[red, ultra thick] (5,2) -- (6,0);
\draw[red, ultra thick] (7,4) -- (9,0);
\draw[red, ultra thick] (6,6) -- (8,8) -- (12,0);
\draw[red, ultra thick] (8,8) -- (10,10) -- (15,0);
\draw[red, ultra thick] (10,10) -- (10,12);
\end{scope}
\end{tikzpicture}
\caption{A triangulation of $G_8$ and the corresponding labelled tree after rescaling.}
\label{triangulation:tree}
\end{figure}
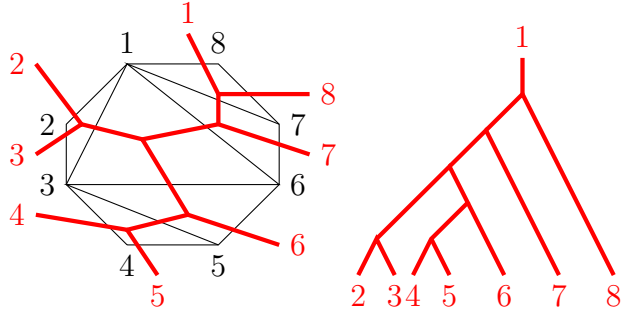
\end{center}
\par
The tree degree admits an alternative description in terms of the corresponding triangulation as follows.
\par
We will adopt the following notations on cyclic intervals: $\llbracket i,i \rrbracket=\{i\}$ and for $1\leq i<j\leq n$,
$$\llbracket i,j \rrbracket:=\{i,i+1,\ldots,j\},\ \ \llbracket j,i \rrbracket:=\{j,j+1,\ldots,n\}\cup\{1,2,\ldots,i\}.$$
The A-degree on Pl\"ucker coordinates is defined for $i<j$ as
$$a_{ij}:=\ddeg_A(P_{ij}):=\#\{(a_r, b_r) \in \Delta_d \mid \{a_r,b_r\}\cap\llbracket i,j-1\rrbracket\ \ \text{has cardinal $1$}\}.$$

By definition the following proposition holds.
\begin{proposition}\label{Prop:treeA}
For any $1\leq i<j\leq n$, $\ddeg_{T_\Delta}(P_{ij})=\ddeg_A(P_{ij})$.
\end{proposition}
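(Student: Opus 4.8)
The plan is to unwind both definitions and match them term by term. Fix a triangulation $\Delta$ of $G_n$ with diagonal set $\Delta_d$, and fix $1\leq i<j\leq n$. On the tree side, $\ddeg_{T_\Delta}(P_{ij})$ counts the internal edges of $T_\Delta$ that separate leaf $i$ from leaf $j$; on the triangulation side, $\ddeg_A(P_{ij})$ counts the diagonals $(a_r,b_r)\in\Delta_d$ such that exactly one of $a_r,b_r$ lies in the cyclic interval $\llbracket i,j-1\rrbracket$. The key is the classical duality from the construction of $T_\Delta$: internal edges of $T_\Delta$ are in bijection with diagonals of $\Delta$, since step (2) of the algorithm draws an internal edge exactly between the two vertices (= triangles) sharing a diagonal. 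So it suffices to show that, under this bijection, an internal edge $e$ separates leaves $i$ and $j$ in $T_\Delta$ if and only if its corresponding diagonal $d=(a,b)$ has exactly one endpoint in $\llbracket i,j-1\rrbracket$.

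The main step is the following observation: removing the internal edge $e$ from $T_\Delta$ disconnects it into two components, and the leaf labels appearing in one component are precisely the labels on one side of the diagonal $d$ (including one of the two endpoints of $d$ in each piece, depending on the labelling convention), while the other component carries the complementary labels. Concretely, if $d=(a,b)$ with $a<b$, the diagonal splits the vertex set $\{1,\dots,n\}$ of $G_n$ into the two arcs $\llbracket a,b-1\rrbracket$ and $\llbracket b,a-1\rrbracket$ — this is exactly the bookkeeping forced by the leaf-relabelling in steps (3)--(4), where boundary edge $(i,i+1)$ of the polygon produces leaf $i+1$ of the tree (or rather leaf $i$ after the rescaling shown in Figure \ref{triangulation:tree}; I would first pin down this index shift carefully). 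Granting this, leaves $i$ and $j$ lie on opposite sides of $e$ in $T_\Delta$ precisely when $i$ and $j$ lie in different arcs determined by $d$, i.e.\ when exactly one of $i,j$ lies in $\llbracket a,b-1\rrbracket$; and a short check shows this condition on the pair $\{i,j\}$ is equivalent to ``exactly one of $a,b$ lies in $\llbracket i,j-1\rrbracket$'' — both say that the two chords $(i,j)$ and $(a,b)$ of the $n$-gon cross. I would phrase this symmetrically: $d$ separates $i$ from $j$ in the tree iff the diagonals $(i,j)$ and $(a,b)$ cross as chords of $G_n$, and the crossing condition admits the two equivalent interval descriptions above.

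So the argument is: (i) establish the edge--diagonal bijection from the algorithm; (ii) nail the index convention relating polygon vertices/edges to tree leaves; (iii) prove that an internal tree edge separates two leaves iff the corresponding diagonal crosses the chord joining those two polygon vertices; (iv) observe that ``exactly one of $a,b$ in $\llbracket i,j-1\rrbracket$'' is a reformulation of this crossing condition; then sum over all internal edges / all diagonals to conclude $\ddeg_{T_\Delta}(P_{ij})=\ddeg_A(P_{ij})$. The main obstacle is really just the bookkeeping in (ii) and (iii): being careful about whether the endpoints $a,b$ of the diagonal get counted, and about the half-open interval $\llbracket i,j-1\rrbracket$ versus $\llbracket i,j\rrbracket$, so that the two counts agree exactly rather than up to an off-by-one error. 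Once the conventions are fixed, each of (i)--(iv) is elementary, which is why the paper can assert the proposition holds ``by definition.''
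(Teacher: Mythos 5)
Your proposal is correct and matches the paper's (implicit) argument: the paper offers no proof beyond asserting the identity holds by definition, and the edge--diagonal bijection plus the separation reformulation you describe is exactly the intended unwinding. The one bookkeeping point you flag does need fixing: since leaf $k$ corresponds to the boundary edge $(k-1,k)$ of the polygon, the diagonal $(a,b)$ puts the leaves $\llbracket a+1,b\rrbracket$ (not $\llbracket a,b-1\rrbracket$) on one side, after which the equivalence with ``exactly one of $a,b$ lies in $\llbracket i,j-1\rrbracket$'' checks out (note this is separation of the boundary edges $(i-1,i)$ and $(j-1,j)$ by the diagonal, which is slightly weaker than the chords $(i,j)$ and $(a,b)$ crossing, since shared endpoints are allowed).
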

\begin{definition}
A diagonal $(a,b)$ in the triangulation $\Delta$ is called connecting $\llbracket p,q\rrbracket$ and $\llbracket s,t\rrbracket$, if $a\in \llbracket p,q\rrbracket$ and $b \in \llbracket s,t\rrbracket$ or vice versa. The number of such diagonals will be denoted by $C_{p,q}^{s,t}$ and called \emph{connection number}.
\end{definition}

Using this notation, the A-degree on the Pl\"ucker coordinates can be written as:
\begin{equation}\label{Eq:Adeg}
a_{ij}=C_{i,j-1}^{j,i-1}.
\end{equation}
This alternative description gives us the following:
\begin{proposition}\label{a-prop}
For $1\leq i<j<k<l\leq n$,
\begin{enumerate}
\item $a_{ij}+a_{kl}=a_{ik}+a_{jl}$ if and only if $C_{j,k-1}^{l,i-1}=0$; when this is the case, $a_{ij}+a_{kl}>a_{il}+a_{jk}$.
\item $a_{il}+a_{jk}=a_{ik}+a_{jl}$ if and only if $C_{i,j-1}^{k,l-1}=0$;
when this is the case, $a_{ij}+a_{kl}<a_{il}+a_{jk}$.
\end{enumerate}
\end{proposition}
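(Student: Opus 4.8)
The plan is to reduce the statement to counting the diagonals of $\Delta$ according to how they meet the four cyclic arcs determined by $i,j,k,l$, and then to establish one geometric fact about planar triangulations. Fix $1\le i<j<k<l\le n$ and put $A=\llbracket i,j-1\rrbracket$, $B=\llbracket j,k-1\rrbracket$, $C=\llbracket k,l-1\rrbracket$, $D=\llbracket l,i-1\rrbracket$, four nonempty cyclic intervals partitioning $\{1,\dots,n\}$. For distinct $X,Y\in\{A,B,C,D\}$ let $x_{XY}$ be the number of diagonals in $\Delta_d$ with one endpoint in $X$ and the other in $Y$; thus $x_{AC}=C_{i,j-1}^{k,l-1}$ and $x_{BD}=C_{j,k-1}^{l,i-1}$. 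Since $\llbracket i,k-1\rrbracket=A\cup B$, $\llbracket j,l-1\rrbracket=B\cup C$, and so on, formula \eqref{Eq:Adeg} expands each of the six relevant A-degrees as a sum of $x_{XY}$'s, e.g.\ $a_{ik}=x_{AC}+x_{AD}+x_{BC}+x_{BD}$, $a_{ij}=x_{AB}+x_{AC}+x_{AD}$, $a_{jk}=x_{AB}+x_{BC}+x_{BD}$, $a_{kl}=x_{AC}+x_{BC}+x_{CD}$, $a_{il}=x_{AD}+x_{BD}+x_{CD}$, $a_{jl}=x_{AB}+x_{AC}+x_{BD}+x_{CD}$. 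Substituting, the three pairings collapse to
\[
(a_{ik}+a_{jl})-(a_{ij}+a_{kl})=2x_{BD},\qquad (a_{ik}+a_{jl})-(a_{il}+a_{jk})=2x_{AC},
\]
so in particular $(a_{ij}+a_{kl})-(a_{il}+a_{jk})=2(x_{AC}-x_{BD})$. The two ``if and only if'' clauses are then immediate: $a_{ij}+a_{kl}=a_{ik}+a_{jl}$ exactly when $x_{BD}=0$, i.e.\ $C_{j,k-1}^{l,i-1}=0$, and $a_{il}+a_{jk}=a_{ik}+a_{jl}$ exactly when $x_{AC}=0$, i.e.\ $C_{i,j-1}^{k,l-1}=0$.

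The only substantive point left is the strict inequality in each case, which amounts to showing that $x_{AC}$ and $x_{BD}$ are never both zero; equivalently, $\Delta$ contains a diagonal joining $A$ to $C$ or one joining $B$ to $D$. Here is how I would argue. Since each of $A,B,C,D$ is nonempty, $(i,k)$ and $(j,l)$ are diagonals of the $n$-gon, and they cross at an interior point $Q$, which lies either on a diagonal $d$ of $\Delta$ or in the interior of a triangle $\tau$ of $\Delta$. In the first case, if $d\in\{(i,k),(j,l)\}$ we are done; otherwise $d$ crosses both $(i,k)$ and $(j,l)$, and since a chord crossing $(i,k)$ has one endpoint in $A\cup B$ and one in $C\cup D$, while a chord crossing $(j,l)$ has one endpoint in $B\cup C$ and one in $D\cup A$, a two-line case check forces $d$ to be an $A$--$C$ or a $B$--$D$ diagonal. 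In the second case, no arc-pair $A\cup B$, $B\cup C$, $C\cup D$, $D\cup A$ can contain all three vertices of $\tau$ (each such vertex set lies in a closed half-plane bounded by the line through $Q$ along $(i,k)$ or $(j,l)$, which would put $Q$ outside the open triangle); writing $\alpha,\beta,\gamma,\delta$ for ``$\tau$ meets $A$'', \dots, ``$\tau$ meets $D$'', this gives $\gamma\vee\delta$, $\delta\vee\alpha$, $\alpha\vee\beta$, $\beta\vee\gamma$, and an elementary disjunction chase forces $\alpha\wedge\gamma$ or $\beta\wedge\delta$---i.e.\ a side of $\tau$ joins $A$ to $C$ or a side joins $B$ to $D$. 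Feeding this back into the identities above: when $C_{j,k-1}^{l,i-1}=0$ we get $x_{BD}=0$, hence $x_{AC}>0$, hence $a_{ij}+a_{kl}>a_{il}+a_{jk}$; when $C_{i,j-1}^{k,l-1}=0$ we get $x_{AC}=0$, hence $x_{BD}>0$, hence $a_{ij}+a_{kl}<a_{il}+a_{jk}$.

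The expansion and the collapse to the two identities are routine once \eqref{Eq:Adeg} is available, so the real obstacle is the diagonal-existence step; the planar, non-crossing structure of $\Delta$ is doing the work there. As a safety net I would keep in mind the tree reformulation via Proposition \ref{Prop:treeA}: $T_\Delta$ is a plane trivalent tree, and four distinct leaves of such a tree always induce a resolved, non-crossing quartet, so exactly one of $a_{ij}+a_{kl}$, $a_{ik}+a_{jl}$, $a_{il}+a_{jk}$ is strictly smallest while the other two are equal---which is precisely the trichotomy recorded in the proposition, and in particular rules out all three sums being equal.
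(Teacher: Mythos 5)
Your proof is correct and follows essentially the same route as the paper: your $x_{XY}$ bookkeeping is exactly the paper's expansion of each $a$-degree into connection numbers between the four cyclic arcs via Lemma~\ref{Lem:C-nb}(2), and the comparison of the three pairings is identical. The one place you go beyond the paper is the crossing-point argument showing that $C_{i,j-1}^{k,l-1}$ and $C_{j,k-1}^{l,i-1}$ cannot both vanish --- the paper merely asserts this (``Notice that in a triangulation\dots'') --- so your case analysis (the intersection point $Q$ of $(i,k)$ and $(j,l)$ lying on a diagonal of $\Delta$ versus in the interior of a triangle of $\Delta$) is a correct and worthwhile filling-in of that step.
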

We postpone the proof of the proposition to Section~\ref{proofs}.

\subsection{X-degree and plabic degree}
Kodama and Williams \cite{KW} associate to a triangulation $\Delta$ a plabic
graph $\mathcal{G}_{\Delta}$ as follows.
\par
\begin{enumerate}[(i)]
\item Put a black vertex in the interior of each triangle in $\Delta$ and connect it to the three vertices of that triangle.
\item Color vertices of $G_n$ which are incident to a diagonal of $\Delta$ in white; color the remaining vertices of $G_n$ in black.
\item Erase the edges of $\Delta$, and contract every pair of adjacent vertices which have the same color. This produces a new graph $G_n^\Delta$ with $n$ boundary vertices, in bijection with the vertices of the original $n$-gon $G_n$.
\item Add one ray to each of the boundary vertices of $G_n^\Delta$ such that they do not intersect. Using the same boundary labelling of the $n$-gon, the result is a plabic graph, which will be denoted by $\mathcal{G}_\Delta$.
\end{enumerate}

We call $G_n^\Delta$ the plabic $n$-gon associated to the triangulation $\Delta$. Boundary vertices of $G_n^\Delta$ are colored by black or white.

\par
We fix a perfect orientation on $\mathcal{G}_\Delta$ such that the source set is $\{1,2\}$. Recall the definition of plabic degree.
\par

The plabic degree also has an alternative description in terms of the corresponding triangulation as follows.

For a fixed triangulation $\Delta=\{(a_1,b_1),(a_2,b_2),\ldots,(a_{n-3},b_{n-3})\}$ of $G_n$ the X-degrees $x_{ij}:=\ddeg_X(P_{ij})$ of the Pl\"ucker coordinates are defined by
\begin{itemize}
\item $\ddeg_X(P_{12})=0$;
\item for $2< j\leq n$, $\ddeg_X(P_{1j})=C_{j,1}^{j,1}+C_{1,1}^{2,j-1}$;
\item for $2< j\leq n$, $\ddeg_X(P_{2j})=C_{j,1}^{j,1}$;
\item for $2<i<j\leq n$, $\ddeg_X(P_{ij})=C_{i,1}^{i,1}+C_{j,i-1}^{j,1}$.
\end{itemize}

\begin{theorem}\label{Thm:main}
For any $1\leq i<j\leq n$, $\ddeg_{\mathcal{G}_{\Delta}}(P_{ij})=\ddeg_X(P_{ij})$.
\end{theorem}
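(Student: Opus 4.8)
The plan is to show that the combinatorially defined plabic degree $\ddeg_{\mathcal G_\Delta}(P_{ij})$, computed via the minimal $J$-flow in the perfect orientation with source set $\{1,2\}$, agrees case by case with the explicit formula defining $\ddeg_X(P_{ij})$. The key is to translate the flow-counting on $\mathcal G_\Delta$ back to the triangulation $\Delta$ of the $n$-gon, using the fact (Postnikov/Rietsch--Williams, reviewed above) that the minimal $J$-flow is unique and that its degree counts internal faces to the left of the flow. First I would unwind the Kodama--Williams construction of $\mathcal G_\Delta$ (steps (i)--(iv)) to describe its faces: each face of $\mathcal G_\Delta$ corresponds to a vertex of $G_n$, and which faces are internal versus boundary, and their face-labels by $(n-k)=(n-2)$-subsets, is dictated by the diagonals of $\Delta$ incident to that vertex. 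Concretely, for $\Gr(2,n)$ the face labels are singletons' complements; a face is internal precisely when the corresponding vertex of $G_n$ has at least one incident diagonal, i.e.\ is colored white in step (ii).

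Next I would fix the perfect orientation with $I_{\mathcal O}=\{1,2\}$ and, for each of the four cases in the definition of $\ddeg_X$, identify the unique minimal flow and count the internal faces to its left. For $P_{12}$ there is the trivial flow (no directed paths, since $J=I_{\mathcal O}$), so the degree is $0$, matching $\ddeg_X(P_{12})=0$. For $P_{2j}$ with $j>2$, the minimal flow is a single directed path from the source $1$ to the sink $j$ (since $2\in J\cap I_{\mathcal O}$), and I would argue its minimal-degree realization sweeps out exactly the internal faces corresponding to vertices in a cyclic interval determined by $j$, giving the connection number $C_{j,1}^{j,1}$. For $P_{1j}$ one needs a path from source $2$ to sink $j$, and the extra term $C_{1,1}^{2,j-1}$ accounts for the internal faces forced on the left by the path's departure from vertex $2$ rather than $1$. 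Finally, for $2<i<j$, the flow consists of two vertex-disjoint paths, from $1$ to $i$ and from $2$ to $j$ (or a suitable re-pairing), and the degree splits as $C_{i,1}^{i,1}+C_{j,i-1}^{j,1}$; here I would use planarity and vertex-disjointness to show the two paths can be routed so that the internal faces they enclose are exactly those two connection-number sets and that no smaller choice exists.

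The main obstacle will be the bookkeeping in this last case: showing that the minimal two-path flow genuinely achieves the claimed sum and not something smaller, which amounts to a careful analysis of how self-avoiding vertex-disjoint paths in $\mathcal G_\Delta$ can wind around the internal faces, and translating "minimal number of internal faces to the left" into the connection-number language. A clean way to organize this is to prove a single lemma describing, for a directed path from boundary vertex $a$ to boundary vertex $b$ in the perfect orientation, exactly which internal faces can lie to its left, and then to observe that minimality forces the flow to take the "tightest" such path (or pair of paths) hugging the boundary arc of $G_n$ between the relevant vertices. An alternative, possibly shorter route would be to bypass the direct flow computation entirely: combine Proposition~\ref{Prop:treeA} and Proposition~\ref{a-prop} with the change-of-variables relating the A-degree to the X-degree (the X-degree formulas are manifestly built from connection numbers $C$, just as \eqref{Eq:Adeg} writes $a_{ij}=C_{i,j-1}^{j,i-1}$), reducing Theorem~\ref{Thm:main} to an identity among connection numbers plus the already-known agreement of tree and plabic degrees; I would try this second approach first and fall back on the explicit flow analysis only for the cases it does not cover.
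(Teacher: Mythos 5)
Your primary route --- identifying the unique minimal $J$-flow for each $P_{ij}$ and counting the internal faces to its left directly --- is a genuinely different strategy from the paper's, which instead argues by induction on $n$: it locates an ``ear'' of the triangulation, i.e.\ a boundary vertex $s$ met by no diagonal, cuts along $(s-1,s+1)$ to pass to a triangulated $(n-1)$-gon, and verifies through a case analysis that $\ddeg_X$ and $\ddeg_{\mathcal{G}_\Delta}$ change by the same amount; the only flow-theoretic input is then a local statement (Lemma~\ref{Lem:count}) saying that the face of $(s-1,s+1)$ lies to the left of every directed path into a sink $i<s$ and to the right of every directed path into a sink $j>s$. As written, however, your proposal has a genuine gap precisely where the theorem's content lies: the claim that the minimal flow achieves degree $C_{i,1}^{i,1}+C_{j,i-1}^{j,1}$ (and the analogous one- and two-path identities) is only announced as something you ``would argue'' via ``careful bookkeeping,'' with no mechanism for ruling out a cheaper flow --- and that lower bound is exactly what makes a direct global argument hard and what the paper's ear-cutting induction is designed to avoid. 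There is also a concrete error in your setup: the internal faces of $\mathcal{G}_\Delta$ are in bijection with the $n-3$ diagonals of $\Delta$ (erasing a diagonal shared by two triangles leaves a quadrilateral internal face), not with the vertices of $G_n$, and the face labels are $2$-element subsets, not complements of singletons. Since the degree of a path is by definition the number of internal faces to its left, this miscount would corrupt every computation downstream.

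Your proposed fallback is not available either. Propositions~\ref{Prop:treeA} and~\ref{a-prop} concern the A-/tree degree, which is a \emph{different} weight vector from the X-/plabic degree: compare the reversed inequalities in Propositions~\ref{a-prop} and~\ref{x-prop}, and note that the main theorem of Section~\ref{sec:maintheorem} only asserts equality of the initial ideals $\inn_{-\mathbf{w}_{T_\Delta}}(I_{2,n})$ and $\inn_{\mathbf{w}_{\mathcal{G}_\Delta}}(I_{2,n})$ after a sign flip, not equality of the degrees themselves. Worse, the proof of that theorem \emph{uses} the present statement, so appealing to ``the already-known agreement of tree and plabic degrees'' is both false as stated and circular. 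If you want to complete a proof along your lines, you must either carry out the minimality analysis for flows in full (starting from the correct face/diagonal dictionary), or adopt a reduction such as the paper's induction on $n$.
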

The proof is again postponed to Section~\ref{proofs}.
\par
The X-degrees, and hence the plabic degrees, satisfy the following relations.

\begin{proposition}\label{x-prop}
For $1\leq i<j<k<l\leq n$,
\begin{enumerate}
\item $x_{ij}+x_{kl}=x_{ik}+x_{jl}$ if and only if $C_{j,k-1}^{l,i-1}=0$; when this is the case, $x_{ij}+x_{kl}<x_{il}+x_{jk}$.
\item $x_{il}+x_{jk}=x_{ik}+x_{jl}$ if and only if $C_{i,j-1}^{k,l-1}=0$;
when this is the case, $x_{ij}+x_{kl}>x_{il}+x_{jk}$.
\end{enumerate}
\end{proposition}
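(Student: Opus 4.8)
The plan is to reduce Proposition~\ref{x-prop} to the single pair of identities
\begin{equation*}
x_{ij}+x_{kl}-x_{ik}-x_{jl}=C_{j,k-1}^{l,i-1},\qquad x_{il}+x_{jk}-x_{ik}-x_{jl}=C_{i,j-1}^{k,l-1}\tag{$\star$}
\end{equation*}
(for all $1\le i<j<k<l\le n$) and then to prove $(\star)$ by comparison with the $A$-degree. Granting $(\star)$, both "if and only if" statements are immediate. For the strict inequalities one uses the fact — already part of the proof of Proposition~\ref{a-prop}, and geometrically the statement that $\Delta$ has no crossing diagonals while the dual tree $T_\Delta$ is trivalent and hence resolves every quartet of leaves — that exactly one of $C_{j,k-1}^{l,i-1}$, $C_{i,j-1}^{k,l-1}$ is nonzero. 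Thus if $C_{j,k-1}^{l,i-1}=0$ then $C_{i,j-1}^{k,l-1}>0$, and $(\star)$ gives $x_{ij}+x_{kl}=x_{ik}+x_{jl}<x_{il}+x_{jk}$; the case $C_{i,j-1}^{k,l-1}=0$ is symmetric.

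To prove $(\star)$, I would first establish the auxiliary identity
\begin{equation*}
2x_{ij}+a_{ij}=s(i)+s(j)\qquad(1\le i<j\le n),
\end{equation*}
where $s\colon\{1,\dots,n\}\to\mathbb{Z}_{\ge0}$ is defined by $s(2):=0$, $s(1):=a_{12}$ (the number of diagonals of $\Delta$ at the vertex $1$), and $s(v):=2x_{2v}+a_{2v}$ for $3\le v\le n$; the identity then holds on every pair containing the index $2$ by the very definition of $s$ (using $x_{12}=0$ for the pair $\{1,2\}$). For a pair $\{1,v\}$ with $v\ge 3$, and for a pair $\{i,j\}$ with $3\le i<j$, one substitutes the defining formulas for the $X$-degrees together with $a_{pq}=C_{p,q-1}^{q,p-1}$ from \eqref{Eq:Adeg}, and rewrites every connection number that occurs in terms of the numbers of diagonals whose two endpoints lie in prescribed blocks of the partition of the polygon vertices cut out by the distinguished vertices $1,2,i,j$ (using $C_{j,i-1}^{j,1}=C_{j,1}^{j,1}+C_{j,1}^{2,i-1}$ to make the term $C_{j,i-1}^{j,1}$ in $x_{ij}$ linear in these blocks). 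Both sides then become the same combination of such counts, the only input beyond bookkeeping being the symmetry of "number of diagonals joining block $Y$ to block $Z$" in $Y,Z$.

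Finally, the same type of block computation applied to $a_{ij}=C_{i,j-1}^{j,i-1}$ for a quadruple $i<j<k<l$ — i.e.\ exactly what the proof of Proposition~\ref{a-prop} carries out, now with the four blocks $\llbracket i,j-1\rrbracket,\llbracket j,k-1\rrbracket,\llbracket k,l-1\rrbracket,\llbracket l,i-1\rrbracket$ — yields $a_{ij}+a_{kl}-a_{ik}-a_{jl}=-2\,C_{j,k-1}^{l,i-1}$ and $a_{il}+a_{jk}-a_{ik}-a_{jl}=-2\,C_{i,j-1}^{k,l-1}$. Forming the combination $(2x_{ij}+a_{ij})+(2x_{kl}+a_{kl})-(2x_{ik}+a_{ik})-(2x_{jl}+a_{jl})$, the $s$-terms cancel identically, so it equals $0$; hence $2(x_{ij}+x_{kl}-x_{ik}-x_{jl})=-(a_{ij}+a_{kl}-a_{ik}-a_{jl})=2\,C_{j,k-1}^{l,i-1}$, and the same computation with $P_{il},P_{jk}$ in place of $P_{ij},P_{kl}$ gives the second identity of $(\star)$, finishing the proof.

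The only genuine work is the connection-number bookkeeping of the middle paragraph (and its twin for the $a_{ij}$'s). It is elementary, but the care has to be spent precisely at the Pl\"ucker coordinates $P_{1v}$ and $P_{2v}$, where the definition of the plabic (equivalently $X$-) degree is not uniform; this non-uniformity is the imprint of the chosen source set $\{1,2\}$ and is exactly what forces the boundary values $s(1)=a_{12}$, $s(2)=0$. I expect this case-splitting, and not any conceptual difficulty, to be the main obstacle.
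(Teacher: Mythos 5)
Your proposal is correct, and it reaches the two key identities $x_{ij}+x_{kl}-x_{ik}-x_{jl}=C_{j,k-1}^{l,i-1}$ and $x_{il}+x_{jk}-x_{ik}-x_{jl}=C_{i,j-1}^{k,l-1}$ by a genuinely different route from the paper. The paper computes $x_{ij}+x_{kl}$, $x_{ik}+x_{jl}$, $x_{il}+x_{jk}$ head-on, splitting into four cases according to whether $i$ or $j$ lies in $\{1,2\}$, and cancels connection numbers pairwise using Lemma~\ref{Lem:C-nb}; your route instead proves the auxiliary identity $2x_{ij}+a_{ij}=s(i)+s(j)$ and then imports the quadratic relations already computed for the $A$-degree ($a_{ij}+a_{kl}-a_{ik}-a_{jl}=-2C_{j,k-1}^{l,i-1}$, which is indeed what the displayed expressions in the paper's proof of Proposition~\ref{a-prop} yield). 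I checked your auxiliary identity in all three nontrivial pair types ($\{1,v\}$, $\{2,v\}$, $\{i,j\}$ with $i,j\geq 3$) by expanding the connection numbers over the blocks cut out by $1,2,i,j$, and it holds; the quadruple cancellation of the $s$-terms then gives $(\star)$ exactly, and the strict inequalities follow since at most one of $C_{j,k-1}^{l,i-1}$, $C_{i,j-1}^{k,l-1}$ can be nonzero (crossing diagonals) and, as the paper notes, not both vanish. What your approach buys is a conceptual explanation that the paper's two propositions only hint at: $-2x_{ij}$ and $a_{ij}$ differ by a function of the form $f(i)+f(j)$, i.e.\ by pendant-edge lengths of the tree metric, which is precisely why the equalities in Propositions~\ref{a-prop} and \ref{x-prop} are governed by the same vanishing conditions while the strict inequalities point in opposite directions, and why both weight vectors land in the same cone of $\trop(\Gr(2,n))$ up to sign. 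The cost is that the case analysis does not disappear, it merely migrates into the verification of $2x_{ij}+a_{ij}=s(i)+s(j)$ for pairs meeting $\{1,2\}$, which you correctly identify as the only real work; since you only sketch that bookkeeping rather than carry it out, a full writeup would still need roughly the same amount of connection-number manipulation as the paper's cases (a)--(d).
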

The proof can be found in Section~\ref{proofs}.

\subsection{Main theorem}\label{sec:maintheorem}
Let $\Delta=\Delta_d\cup\Delta_e$ be a triangulation of $G_n$
and let  $T_{\Delta}$ be the labelled tree corresponding to $\Delta$.
The tree degrees of the Pl{\"u}cker coordinates  $\operatorname{deg}_{T_\Delta}(P_{ij})$ give rise to a weight vector
$\mathbf{w}_{T_{\Delta}}$ and  we let $I_{T_{\Delta}} = \inn_{-\mathbf{w}_{T_{\Delta}}}(I_{2,n})$.
Similarly, we call $\mathbf{w}_P$ the weight vector associated to a plabic graph $P$  in Definition~\ref{def:PD} and let $I_{\mathcal{G}_{\Delta}} = \inn_{\mathbf{w}_{\mathcal{G}_{\Delta}}}(I_{2,n})$.

We can now give a proof of the
theorem in the introduction, restated in the notation introduced in the previous paragraph.
\begin{theorem}\label{Thm:main}
For a given triangulation $\Delta$ of $G_n$, the ideals  $I_{T_{\Delta}} $ and $I_{\mathcal{G}_{\Delta}}$ are the same.
\end{theorem}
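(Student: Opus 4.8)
The plan is to show that the two weight vectors $\mathbf{w}_{T_\Delta}$ (built from the tree degrees, used via $\inn_{-\mathbf{w}_{T_\Delta}}$) and $\mathbf{w}_{\mathcal{G}_\Delta}$ (built from the plabic degrees) select the same initial terms in every Pl\"ucker relation of $I_{2,n}$, and that this common initial ideal is monomial-free. Since $I_{2,n}$ is generated by the three-term Pl\"ucker relations $P_{ij}P_{kl}-P_{ik}P_{jl}+P_{il}P_{jk}$ for $1\le i<j<k<l\le n$, and since a Gr\"obner-type argument reduces the computation of an initial ideal to the initial forms of a generating set once we know the initial forms are again of the expected (binomial or trinomial) shape, it suffices to compare, for each such quadruple $(i,j,k,l)$, which of the three monomials $P_{ij}P_{kl}$, $P_{ik}P_{jl}$, $P_{il}P_{jk}$ is selected by each weight vector. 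Concretely I would first record that for the $T_\Delta$-side the relevant quantity for monomial $P_{ab}P_{cd}$ is $-(a_{ab}+a_{cd})$ (because we use $\inn_{-\mathbf{w}}$), so the selected monomials are those \emph{maximizing} $a_{ab}+a_{cd}$; for the $\mathcal{G}_\Delta$-side, by Theorem~\ref{Thm:main} the weight of $P_{ab}P_{cd}$ is $x_{ab}+x_{cd}$ and the selected monomials are those \emph{minimizing} this sum.

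The key step is then a purely combinatorial comparison using Propositions~\ref{a-prop} and~\ref{x-prop}. Both propositions are stated in terms of the \emph{same} vanishing conditions on connection numbers: $C_{j,k-1}^{l,i-1}=0$ in case (1) and $C_{i,j-1}^{k,l-1}=0$ in case (2), and for a fixed triangulation exactly one of the three mutually exclusive situations occurs (the middle term $a_{ik}+a_{jl}$, resp.\ $x_{ik}+x_{jl}$, is always in between, and it ties the larger-or-smaller of the other two according to which connection number vanishes; if neither vanishes one checks the middle term is strictly extremal — this trichotomy is implicitly what the two propositions encode). In case (1), Proposition~\ref{a-prop} gives $a_{ij}+a_{kl}=a_{ik}+a_{jl}>a_{il}+a_{jk}$, so the $a$-maximizers are $\{P_{ij}P_{kl},\,P_{ik}P_{jl}\}$; Proposition~\ref{x-prop} gives $x_{ij}+x_{kl}=x_{ik}+x_{jl}<x_{il}+x_{jk}$, so the $x$-minimizers are \emph{also} $\{P_{ij}P_{kl},\,P_{ik}P_{jl}\}$. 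Hence the initial form is the binomial $P_{ij}P_{kl}-P_{ik}P_{jl}$ on both sides. Case (2) is entirely symmetric, yielding the binomial $P_{il}P_{jk}-P_{ik}P_{jl}$ (up to sign) on both sides. In the remaining case where neither connection number vanishes, the middle monomial is strictly extremal for both the $a$-sum (strict max) and the $x$-sum (strict min), so both initial forms equal the single monomial $-P_{ik}P_{jl}$ — wait, that would be a monomial; but this case in fact cannot occur for a genuine triangulation precisely because the tree/plabic weight vectors lie in the tropical Grassmannian, so one argues that for every quadruple at least one of the two connection numbers vanishes. I would make this last point precise by invoking the tree structure: leaves $i,j,k,l$ of the trivalent tree $T_\Delta$ are joined by a unique topological "H", and the two connection numbers measure the two ways the crossbar can degenerate, exactly one of which is trivial.

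The step I expect to be the main obstacle is the bookkeeping needed to pass from "the initial forms of the generators agree" to "the initial ideals agree": in general $\inn_\mathbf{w}(I)$ can be strictly larger than the ideal generated by $\{\inn_\mathbf{w}(g):g\in G\}$ for a generating set $G$. Here I would handle this by showing the three-term Pl\"ucker relations form a \emph{Gr\"obner basis} of $I_{2,n}$ with respect to (a generic refinement of) each of the two weight vectors — for $\Gr(2,n)$ this is classical, as these weights correspond to the trivalent tree / triangulation and the associated initial ideal is the well-known squarefree monomial-free binomial ideal of the tree, of the correct dimension $2(n-2)$. Once the common initial ideal $J$ is identified as the binomial ideal generated by $\{P_{ij}P_{kl}-P_{ik}P_{jl}\text{ or }P_{il}P_{jk}-P_{ik}P_{jl}\}$ over all quadruples (one binomial per quadruple, the choice dictated by $\Delta$), the equality $I_{T_\Delta}=J=I_{\mathcal{G}_\Delta}$ is immediate. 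I would also double-check the global sign conventions ($\inn_{-\mathbf{w}}$ versus $\inn_{\mathbf{w}}$) are exactly what flips "maximize the $a$-sum" into "minimize the $x$-sum", since that sign flip is the whole reason Propositions~\ref{a-prop} and~\ref{x-prop} have their inequalities pointing in opposite directions yet still produce the same initial ideal.
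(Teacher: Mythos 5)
Your proposal is correct and follows essentially the same route as the paper: both reduce to comparing, for each quadruple $i<j<k<l$, which monomials of the Pl\"ucker relation are selected by the two weight vectors via Propositions~\ref{Prop:treeA}, \ref{a-prop} and \ref{x-prop}, both rely on the fact that for a triangulation exactly one of $C_{j,k-1}^{l,i-1}$ and $C_{i,j-1}^{k,l-1}$ vanishes, and both dispose of the ``initial forms of generators need not generate the initial ideal'' issue by recognizing $-x_{ij}$ and the tree degrees as tree metrics satisfying the four-point condition (the paper cites \cite[Lemma 4.3.6, Theorem 4.3.5]{MS} for exactly the Gr\"obner-basis fact you flag as the main obstacle). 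Your sign bookkeeping ($\inn_{-\mathbf{w}_{T_\Delta}}$ maximizing the $a$-sum versus $\inn_{\mathbf{w}_{\mathcal{G}_\Delta}}$ minimizing the $x$-sum) matches the paper's conventions.
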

\begin{proof}
First note that by the definition of a triangulation,
$C_{j,k-1}^{l,i-1}\neq 0$ if and only if $C_{i,j-1}^{k,l-1} = 0$. So Proposition~\ref{x-prop} implies that for $1\leq i<j<k<l\leq n$, the maximum of the numbers $-x_{ij}-x_{kl}, -x_{ik}-x_{jl}, -x_{il}-x_{jk}$ is attained exactly twice.
Thus, the metric on $\{1, 2, \ldots, n\}$ defined by $-x_{ij}$ satisfies the four-point condition of Lemma~\cite[Lemma 4.3.6]{MS} and is therefore a tree metric as defined on \cite[page 171]{MS}.
Then Theorem~\ref{Thm:main} and
Theorem~\cite[Theorem 4.3.5]{MS} imply that the weight vector associated to $\mathcal{G}_{\Delta}$ lies in $\trop(\Gr(2,n))$.
Moreover, it follows from the second proof of \cite[Theorem 4.3.5, page 177]{MS} that $\inn_{\mathbf{w}_{\mathcal{G}_{\Delta}}}(I_{2,n})$ is generated by the initial terms of the Pl{\"u}cker relations with respect to $\mathbf{w}_{\mathcal{G}_{\Delta}}$.
Similarly, $\mathbf{w}_{T_{\Delta}}$ defines a tree metric by definition and thus,
$-\mathbf{w}_{T_{\Delta}} \in \trop(\Gr(2,n))$, and $\inn_{-\mathbf{w}_{T_{\Delta}}}(I_{2,n})$
is generated by the initial terms of the Pl{\"u}cker relations with respect to
$-\mathbf{w}_{T_{\Delta}}$. By
Propositions~\ref{Prop:treeA} and \ref{x-prop}, these initial terms agree, and the statement follows.
\end{proof}

\begin{remark}
It has been shown in \cite{SS} that the family obtained from any tree degree is a flat toric degeneration of $\Gr(2,n)$. By the theorem, this statement is also true for the families defined by plabic degrees.
\end{remark}

\begin{remark}
Fix a plabic graph $\mathcal{G}$ with trip permutation $\pi_{n-2,n}$. Let $X_1$ be the toric variety associated to the above binomial ideal. There is another recipe to produce a polytope $P$ from $\Gr(2,n)$ via Newton--Okounkov bodies associated to the positive chart arising from the plabic graph $\mathcal{G}$ (\cite{RW}). By a general result of Anderson \cite{A}, there exists a toric degeneration of $\Gr(2,n)$ to the toric variety $X_2$ associated to the polytope $P$. These two toric varieties $X_1$ and $X_2$ coincide.
\end{remark}

\begin{remark}
Let $A$ be the homogeneous coordinate ring of $\Gr(2,n)$ and let $\nu:A\setminus\{0\}\rightarrow \mathbb{Z}^{2(n-2)}$ be a full rank valuation (i.e., the rank of the lattice generated by $\nu(A)$ is $2(n-2)$). Then a subset $\mathcal{B}\subset A$ is a Khovanskii basis for $\nu$, if $\nu(\mathcal{B})$ generates $\nu(A\backslash\{0\})$ as a semigroup (see, for example, \cite[Definition 1]{KM}). In the proof of Theorem \ref{Thm:main} we have shown that for any triangulation $\Delta$ of $G_n$ the weight vector $\mathbf{w}_{\mathcal{G}_{\Delta}}$ lies in a maximal cone of the tropical Grassmannian. By \cite[Corollary 4.4]{SS}, these cones are all prime, and we hence deduce from  \cite[Theorem 1]{KM} that there exists a full rank valuation $v_{\mathbf{w}_{\mathcal{G}_\Delta}}$ (cf. \cite[Proposition 5.1]{KM})  such that  the Pl\"ucker coordinates are a Khovanskii basis for $(A, v_{\mathbf{w}_{\mathcal{G}_\Delta}})$. 
  On the other hand, for any reduced plabic graph $\mathcal{G}$  of trip permutation $\pi_{n-2,n}$, Rietsch and Williams \cite{RW} constructed an explicit full rank valuation $\nu_\mathcal{G}$ (which not only counts the degree of each flow but also keeps track of which faces in the plabic graph contribute to the degree). It is possible to show that the initial ideal associated to $\nu_\mathcal{G}$ coincides with  $\inn_{\mathbf{w}_{\mathcal{G}_{\Delta}}}(I_{2,n})$, so that the set of Pl\"ucker coordinates is a Khovanskii basis for $\nu_\mathcal{G}$ and that  $\nu_\mathcal{G}$ is a subductive valuation (see, for example, \cite[Definition 3]{KM}).
\end{remark}


\section{Proofs}\label{proofs}
\subsection{Proof of Proposition~\ref{a-prop}}
The following properties of the connection numbers are clear by definition:

\begin{lemma}\label{Lem:C-nb}
Suppose that $1\leq p,q,s,t\leq n$, the following statements hold.
\begin{enumerate}
\item $C_{p,q}^{s,t}=C_{s,t}^{p,q}$.
\item Suppose that $\llbracket s,t\rrbracket\cap \llbracket p,q\rrbracket=\emptyset$. For any $r\in\llbracket s,t\rrbracket$ such that $r\neq s$, $C_{p,q}^{s,t}=C_{p,q}^{s,r-1}+C_{p,q}^{r,t}$; if $r\neq t$, $C_{p,q}^{s,t}=C_{p,q}^{s,r}+C_{p,q}^{r+1,t}$.
\item For $t\in \llbracket s,q\rrbracket$ such that $t\neq q$, $C_{s,q}^{s,t}=C_{s,t}^{s,t}+C_{t+1,q}^{s,t}$.
\item $C_{s,q}^{s,q}=C_{s,s}^{s+1,q}+C_{s+1,q}^{s+1,q}$.
\end{enumerate}
\end{lemma}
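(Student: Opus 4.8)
The plan is to prove all four identities by one and the same elementary device: the finite set of diagonals of $\Delta$ counted by the left-hand side is split into two disjoint subsets according to which cyclic subinterval a chosen endpoint of each diagonal falls into. The only combinatorial fact about cyclic intervals I would invoke is that, for $r\in\llbracket s,t\rrbracket$ with $r\neq s$, one has the disjoint decomposition $\llbracket s,t\rrbracket=\llbracket s,r-1\rrbracket\sqcup\llbracket r,t\rrbracket$, and likewise $\llbracket s,t\rrbracket=\llbracket s,r\rrbracket\sqcup\llbracket r+1,t\rrbracket$ when $r\neq t$; both are immediate from the definition of $\llbracket\,\cdot\,,\cdot\,\rrbracket$.

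For (1), I would simply observe that the condition defining ``a diagonal $(a,b)$ connecting $\llbracket p,q\rrbracket$ and $\llbracket s,t\rrbracket$'' --- one endpoint in $\llbracket p,q\rrbracket$, the other in $\llbracket s,t\rrbracket$ --- is manifestly symmetric under interchanging the two interval-pairs, so $C_{p,q}^{s,t}$ and $C_{s,t}^{p,q}$ count the same diagonals. For (2), the hypothesis $\llbracket s,t\rrbracket\cap\llbracket p,q\rrbracket=\emptyset$ is exactly what is needed: any diagonal counted by $C_{p,q}^{s,t}$ then has precisely one endpoint in $\llbracket p,q\rrbracket$ and precisely one in $\llbracket s,t\rrbracket$. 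Using the splitting $\llbracket s,t\rrbracket=\llbracket s,r-1\rrbracket\sqcup\llbracket r,t\rrbracket$, the endpoint lying in $\llbracket s,t\rrbracket$ falls into exactly one of the two pieces, each of which is still disjoint from $\llbracket p,q\rrbracket$; this sorts each such diagonal into exactly one of the sets counted by $C_{p,q}^{s,r-1}$ and $C_{p,q}^{r,t}$, and conversely every diagonal counted by either of those is counted by $C_{p,q}^{s,t}$. Hence $C_{p,q}^{s,t}=C_{p,q}^{s,r-1}+C_{p,q}^{r,t}$; the $r\neq t$ variant is the identical argument with the other splitting.

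For (3) and (4) I would run the same bookkeeping, now splitting the \emph{first} argument. In (3), since $t\in\llbracket s,q\rrbracket$ and $t\neq q$ we have $\llbracket s,q\rrbracket=\llbracket s,t\rrbracket\sqcup\llbracket t+1,q\rrbracket$; a diagonal connecting $\llbracket s,q\rrbracket$ and $\llbracket s,t\rrbracket$ has one endpoint $b\in\llbracket s,t\rrbracket$, and according to whether the other endpoint lies in $\llbracket s,t\rrbracket$ or in $\llbracket t+1,q\rrbracket$ it is counted once by $C_{s,t}^{s,t}$ or once by $C_{t+1,q}^{s,t}$ --- and by exactly one of them, since these subintervals are disjoint and a diagonal with no endpoint in $\llbracket s,t\rrbracket$ is counted by none of the three numbers. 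This yields $C_{s,q}^{s,t}=C_{s,t}^{s,t}+C_{t+1,q}^{s,t}$. Statement (4) is the instance where one splits $\llbracket s,q\rrbracket=\{s\}\sqcup\llbracket s+1,q\rrbracket$: a diagonal with both endpoints in $\llbracket s,q\rrbracket$ either has both endpoints in $\llbracket s+1,q\rrbracket$, contributing to $C_{s+1,q}^{s+1,q}$, or has $s$ as one endpoint and the other in $\llbracket s+1,q\rrbracket$ (it cannot have $s$ as both, being a diagonal), contributing to $C_{s,s}^{s+1,q}$.

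I do not expect a genuine obstacle here; the content is entirely bookkeeping with cyclic intervals. The single point deserving care --- and the reason the hypotheses are stated as they are --- is checking in each case that the two-way classification is a true partition, with no diagonal counted twice and none omitted. In (2) this rests on the disjointness of $\llbracket p,q\rrbracket$ and $\llbracket s,t\rrbracket$: without it a diagonal could have an endpoint in the overlap and the assignment ``which endpoint is where'' would be ambiguous. In (3) and (4) it rests only on the disjointness of the two pieces into which the interval is split, together with the fact that each diagonal carries a well-defined unordered pair of endpoints; so those two identities hold even though the two interval-pairs overlap.
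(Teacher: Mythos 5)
Your proposal is correct, and it matches the paper, which offers no argument at all beyond the remark that these properties are ``clear by definition'': your interval-splitting bookkeeping is exactly the straightforward verification the authors leave implicit. The one point you rightly flag --- that in (2) the disjointness hypothesis is what makes the assignment of endpoints to intervals unambiguous, while in (3) and (4) only the disjointness of the two pieces of the split is needed --- is the whole content of the lemma.
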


By (\ref{Eq:Adeg}) and Lemma \ref{Lem:C-nb} (2), $a_{ij}+a_{kl}$ equals
$$C_{i,j-1}^{j,k-1}+C_{i,j-1}^{k,l-1}+C_{i,j-1}^{l,i-1}+C_{k,l-1}^{l,i-1}+C_{k,l-1}^{i,j-1}+C_{k,l-1}^{j,k-1};$$
$a_{ik}+a_{jl}$ equals
$$C_{i,j-1}^{k,l-1}+C_{j,k-1}^{k,l-1}+C_{i,j-1}^{l,i-1}+C_{j,k-1}^{l,i-1}+C_{j,k-1}^{l,i-1}+C_{k,l-1}^{l,i-1}+C_{j,k-1}^{i,j-1}+C_{k,l-1}^{i,j-1};$$
and $a_{il}+a_{jk}$ equals
$$C_{i,j-1}^{l,i-1}+C_{j,k-1}^{l,i-1}+C_{k,l-1}^{l,i-1}+C_{j,k-1}^{k,l-1}+C_{j,k-1}^{l,i-1}+C_{j,k-1}^{i,j-1}.$$
Notice that in a triangulation, $C_{j,k-1}^{l,i-1}$ and $C_{i,j-1}^{k,l-1}$ can not both be zero. The proposition follows from comparing the terms.

\subsection{Proof of Proposition~\ref{x-prop}}
First notice that by Lemma \ref{Lem:C-nb} (3), for $2<i<j\leq n$, $x_{i,j}=C_{i,1}^{i,1}+C_{j,1}^{j,1}+C_{2,i-1}^{j,1}$. The proof is separated into three cases:
\begin{enumerate}[(a)]
\item When $2<i<j<k<l\leq n$, we have:
\begin{equation}\label{Eq:xa}
x_{ij}+x_{kl}=C_{i,1}^{i,1}+C_{j,1}^{j,1}+C_{k,1}^{k,1}+C_{l,1}^{l,1}+C_{2,i-1}^{j,1}+C_{2,k-1}^{l,1};
\end{equation}
\begin{equation}\label{Eq:xb}
x_{ik}+x_{jl}=C_{i,1}^{i,1}+C_{j,1}^{j,1}+C_{k,1}^{k,1}+C_{l,1}^{l,1}+C_{2,i-1}^{k,1}+C_{2,j-1}^{l,1};
\end{equation}
\begin{equation}\label{Eq:xc}
x_{il}+x_{jk}=C_{i,1}^{i,1}+C_{j,1}^{j,1}+C_{k,1}^{k,1}+C_{l,1}^{l,1}+C_{2,i-1}^{l,1}+C_{2,j-1}^{k,1}.
\end{equation}
By Lemma \ref{Lem:C-nb} (1) and (2), subtracting (\ref{Eq:xa}) from (\ref{Eq:xb}) gives
$$C_{2,i-1}^{k,1}+C_{2,j-1}^{l,1}-C_{2,i-1}^{j,1}-C_{2,k-1}^{l,1}=-C_{2,i-1}^{j,k-1}-C_{j,k-1}^{l,1}=-C_{j,k-1}^{l,i-1};$$
and subtracting (\ref{Eq:xc}) from (\ref{Eq:xb}) gives
$$C_{2,i-1}^{k,1}+C_{2,j-1}^{l,1}-C_{2,i-1}^{l,1}-C_{2,j-1}^{k,1}=-C_{i,j-1}^{k,1}+C_{i,j-1}^{l,1}=-C_{i,j-1}^{k,l-1}.$$
These computations prove the proposition in this case.

\item When $i=1<2<j<k<l\leq n$, we have:
\begin{equation}\label{Eq:xaa}
x_{1j}+x_{kl}=C_{j,1}^{j,1}+C_{1,1}^{2,j-1}+C_{k,1}^{k,1}+C_{l,1}^{l,1}+C_{2,k-1}^{l,1};
\end{equation}
\begin{equation}\label{Eq:xab}
x_{1k}+x_{jl}=C_{k,1}^{k,1}+C_{1,1}^{2,k-1}+C_{j,1}^{j,1}+C_{l,1}^{l,1}+C_{2,j-1}^{l,1};
\end{equation}
\begin{equation}\label{Eq:xac}
x_{1l}+x_{jk}=C_{l,1}^{l,1}+C_{1,1}^{2,l-1}+C_{j,1}^{j,1}+C_{k,1}^{k,1}+C_{2,j-1}^{k,1}.
\end{equation}
Again by Lemma \ref{Lem:C-nb}, subtracting (\ref{Eq:xaa}) from (\ref{Eq:xab}) gives
$$C_{1,1}^{2,k-1}-C_{1,1}^{2,j-1}+C_{2,j-1}^{l,1}-C_{2,k-1}^{l,1}=-C_{2,k-1}^{l,n}+C_{2,j-1}^{l,n}=-C_{j,k-1}^{l,n};$$
and subtracting (\ref{Eq:xac}) from (\ref{Eq:xab}) gives
$$C_{1,1}^{2,k-1}-C_{1,1}^{2,l-1}+C_{2,j-1}^{l,1}-C_{2,j-1}^{k,1}=-C_{1,1}^{k,l-1}-C_{2,j-1}^{k,l-1}=-C_{1,j-1}^{k,l-1}.$$
\item When $i=2<j<k<l\leq n$, the proof is similar.
\item When $i=1<j=2<k<l\leq n$, we have:
$$x_{12}+x_{kl}=C_{k,1}^{k,1}+C_{l,1}^{l,1}+C_{2,k-1}^{l,1};$$
$$x_{1k}+x_{2l}=C_{k,1}^{k,1}+C_{1,1}^{2,k-1}+C_{l,1}^{l,1};$$
$$x_{1l}+x_{2k}=C_{l,1}^{l,1}+C_{1,1}^{2,l-1}+C_{k,1}^{k,1}.$$
It is then easy to deduce the corresponding statement in the proposition.
\end{enumerate}

\subsection{Proof of Theorem~\ref{Thm:main}}
Fix a triangulation $\Delta = \Delta_d\cup \Delta_e$ and let $\mathcal{G}_{\Delta}$ be the associated  plabic graph as in Section~\ref{sec:maintheorem}.

\subsubsection{First example: a palm at vertex $2$}
We examine Theorem \ref{Thm:main} in the case where the triangulation $\Delta$ is given by $\Delta_d=\{(2,4),(2,5),\ldots,(2,n)\}$.
\begin{figure}[H]
\begin{center}
\begin{tikzpicture}[scale=0.35]
\draw (3,0) -- (6,0) -- (9,3) -- (9,6) -- (6,9) -- (3,9) -- (0,6) -- (0,3) -- (3,0);
\draw (3,9) -- (9,6);
\draw (3,9) -- (9,3);
\draw (3,9) -- (6,0);
\draw (3,9) -- (3,0);
\draw (3,9) -- (0,3);

\draw[blue, thick] (3,9) -- (6,8) -- (6,9);
\draw[blue, thick] (6,8) -- (9,6) -- (7,6) -- (3,9);
\draw[blue, thick] (7,6) -- (9,3) -- (6,4) -- (3,9);
\draw[blue, thick] (6,4) -- (6,0) -- (4,3) -- (3,9);
\draw[blue, thick] (4,3) -- (3,0) -- (2,4) -- (3,9);
\draw[blue, thick] (2,4) -- (0,3) -- (0.5,5) -- (3,9);
\draw[blue, thick] (0.5,5) -- (0,6);

\draw[fill] (6,9) circle [radius=0.2];
\draw[fill] (0,6) circle [radius=0.2];
\draw[fill, white] (3,9) circle [radius=0.25];
    \draw (3,9) circle [radius=0.25];
\draw[fill, white] (9,6) circle [radius=0.25];
    \draw (9,6) circle [radius=0.25];
\draw[fill, white] (9,3) circle [radius=0.25];
    \draw (9,3) circle [radius=0.25];
\draw[fill, white] (6,0) circle [radius=0.25];
    \draw (6,0) circle [radius=0.25];
\draw[fill, white] (3,0) circle [radius=0.25];
    \draw (3,0) circle [radius=0.25];
\draw[fill, white] (0,3) circle [radius=0.25];
    \draw (0,3) circle [radius=0.25];
\node[below left] at (3,0) {5};
\node[below right] at (6,0) {6};
\node[below right] at (9,3) {7};
\node[above right] at (9,6) {8};
\node[above right] at (6,9) {1};
\node[above left] at (3,9) {2};
\node[above left] at (0,6) {3};
\node[below left] at (0,3) {4};

\draw[fill] (0.5,5) circle [radius=0.2];
\draw[fill] (2,4) circle [radius=0.2];
\draw[fill] (4,3) circle [radius=0.2];
\draw[fill] (6,4) circle [radius=0.2];
\draw[fill] (7,6) circle [radius=0.2];
\draw[fill] (6,8) circle [radius=0.2];

\begin{scope}[xshift=17cm, scale=0.65]
\draw[blue, thick] (3,9) -- (6,9);
\draw[blue, thick] (6,9) -- (9,6) -- (7,6) -- (3,9);
\draw[blue, thick] (7,6) -- (9,3) -- (6,4) -- (3,9);
\draw[blue, thick] (6,4) -- (6,0) -- (4,3) -- (3,9);
\draw[blue, thick] (4,3) -- (3,0) -- (2,4) -- (3,9);
\draw[blue, thick] (2,4) -- (0,3) -- (0,6) -- (3,9);
\draw[blue, thick] (6,9) -- (6,11); 
\draw[blue, thick] (9,6) -- (11,6); 
\draw[blue, thick] (9,3) -- (11,1); 
\draw[blue, thick] (6,0) -- (6,-2); 
\draw[blue, thick] (3,0) -- (3,-2); 
\draw[blue, thick] (0,3) -- (-2,1); 
\draw[blue, thick] (0,6) -- (-2,6); 
\draw[blue, thick] (3,9) -- (3,11); 
\draw[fill] (6,9) circle [radius=0.25];
\draw[fill] (0,6) circle [radius=0.25];
\draw[fill, white] (3,9) circle [radius=0.25];
    \draw (3,9) circle [radius=0.25];
\draw[fill, white] (9,6) circle [radius=0.25];
    \draw (9,6) circle [radius=0.25];
\draw[fill, white] (9,3) circle [radius=0.25];
    \draw (9,3) circle [radius=0.25];
\draw[fill, white] (6,0) circle [radius=0.25];
    \draw (6,0) circle [radius=0.25];
\draw[fill, white] (3,0) circle [radius=0.25];
    \draw (3,0) circle [radius=0.25];
\draw[fill, white] (0,3) circle [radius=0.25];
    \draw (0,3) circle [radius=0.25];
\node[below, blue] at (3,-2) {5};
\node[below, blue] at (6,-2) {6};
\node[below right, blue] at (11,1) {7};
\node[right, blue] at (11,6) {8};
\node[above, blue] at (6,11) {1};
\node[above, blue] at (3,11) {2};
\node[left, blue] at (-2,6) {3};
\node[left, blue] at (-2,1) {4};
\draw[fill] (2,4) circle [radius=0.25];
\draw[fill] (4,3) circle [radius=0.25];
\draw[fill] (6,4) circle [radius=0.25];
\draw[fill] (7,6) circle [radius=0.25];
\end{scope}

\begin{scope}[xshift=32cm, scale=0.65]
\draw[blue, thick, ->] (3,9) -- (4.5,9);
    \draw[blue, thick] (4.5,9) -- (6,9);
\draw[blue, thick, ->] (6,9) -- (7.5,7.5);
    \draw[blue, thick] (7.5,7.5) -- (9,6);
\draw[blue, thick, ->] (9,6) -- (8,6);
    \draw[blue, thick] (8,6) -- (7,6);
\draw[blue, thick, ->] (3,9) -- (5,7.5);
    \draw[blue, thick] (5,7.5) -- (7,6);
\draw[blue, thick, ->] (7,6) -- (8,4.5);
    \draw[blue, thick] (8,4.5) -- (9,3);
\draw[blue, thick, ->] (9,3) -- (7.5,3.5);
    \draw[blue, thick] (7.5,3.5) -- (6,4);
\draw[blue, thick, ->] (3,9) -- (4.5,6.5);
    \draw[blue, thick] (4.5,6.5) -- (6,4);
\draw[blue, thick, ->] (6,4) -- (6,2);
    \draw[blue, thick] (6,2) -- (6,0);
\draw[blue, thick, ->] (6,0) -- (5,1);
    \draw[blue, thick] (5,1) -- (4,2);
\draw[blue, thick, ->] (3,9) -- (3.5,5.5);
    \draw[blue, thick] (3.5,5.5) -- (4,2);
\draw[blue, thick, ->] (4,2) -- (3.5,1);
    \draw[blue, thick] (3.5,1) -- (3,0);
\draw[blue, thick, ->] (3,0) -- (2.5,2);
    \draw[blue, thick] (2.5,2) -- (2,4);
\draw[blue, thick, ->] (3,9) -- (2.5,6.5);
    \draw[blue, thick] (2.5,6.5) -- (2,4);
\draw[blue, thick, ->] (2,4) -- (1,3.5);
    \draw[blue, thick] (1,3.5) -- (0,3);
\draw[blue, thick, ->] (0,3) -- (0,4.5);
    \draw[blue, thick] (0,4.5) -- (0,6);
\draw[blue, thick, ->] (3,9) -- (1.5,7.5);
    \draw[blue, thick] (1.5,7.5) -- (0,6);

\draw[blue, thick, ->] (6,11) -- (6,10); 
    \draw[blue, thick] (6,10) -- (6,9);
\draw[blue, thick, ->] (9,6) -- (10,6); 
    \draw[blue, thick] (10,6) -- (11,6);
\draw[blue, thick, ->] (9,3) -- (10,2);
    \draw[blue, thick] (10,2) -- (11,1); 
\draw[blue, thick, ->] (6,0) -- (6,-1);
    \draw[blue, thick] (6,-1) -- (6,-2);
\draw[blue, thick, ->] (3,0) -- (3,-1);
    \draw[blue, thick] (3,-1) -- (3,-2);
\draw[blue, thick, ->] (0,3) -- (-1,2);
    \draw[blue, thick] (-1,2) -- (-2,1);
\draw[blue, thick, ->] (0,6) -- (-1,6);
    \draw[blue, thick] (-1,6) -- (-2,6);  
\draw[blue, thick, ->] (3,11) -- (3,10);
    \draw[blue, thick] (3,10) -- (3,9); 

\draw[fill] (6,9) circle [radius=0.25];
\draw[fill] (0,6) circle [radius=0.25];
\draw[fill, white] (3,9) circle [radius=0.25];
    \draw (3,9) circle [radius=0.25];
\draw[fill, white] (9,6) circle [radius=0.25];
    \draw (9,6) circle [radius=0.25];
\draw[fill, white] (9,3) circle [radius=0.25];
    \draw (9,3) circle [radius=0.25];
\draw[fill, white] (6,0) circle [radius=0.25];
    \draw (6,0) circle [radius=0.25];
\draw[fill, white] (3,0) circle [radius=0.25];
    \draw (3,0) circle [radius=0.25];
\draw[fill, white] (0,3) circle [radius=0.25];
    \draw (0,3) circle [radius=0.25];
\node[below, blue] at (3,-2) {5};
\node[below, blue] at (6,-2) {6};
\node[below right, blue] at (11,1) {7};
\node[right, blue] at (11,6) {8};
\node[above, blue] at (6,11) {1};
\node[above, blue] at (3,11) {2};
\node[left, blue] at (-2,6) {3};
\node[left, blue] at (-2,1) {4};
\draw[fill] (2,4) circle [radius=0.25];
\draw[fill] (4,3) circle [radius=0.25];
\draw[fill] (6,4) circle [radius=0.25];
\draw[fill] (7,6) circle [radius=0.25];
\end{scope}
\end{tikzpicture}

  \caption{Palm at vertex $2$ for $\Gr(2,8)$ and the corresponding plabic graph with perfect orientation.}
  \label{fig:pic3}
\end{center}
\end{figure}
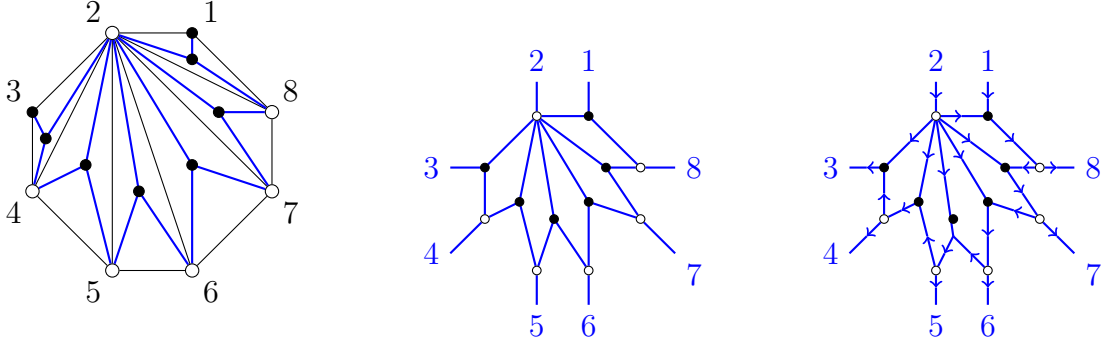

\begin{proposition}\label{Prop:Cartepillar2}
For any $2<j\leq n$, $\ddeg_{\mathcal{G}_{\Delta}}(P_{1j})=\ddeg_{\mathcal{G}_{\Delta}}(P_{2j})=0$; for any $2<i<j\leq n$, $\ddeg_{\mathcal{G}_{\Delta}}(P_{ij})=n-j+1$.
\end{proposition}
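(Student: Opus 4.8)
The plan is to read off the plabic graph $\mathcal{G}_{\Delta}$ attached to the palm triangulation $\Delta_d=\{(2,4),\dots,(2,n)\}$ and compute every plabic degree directly. Running the Kodama--Williams recipe on this $\Delta$, the triangles are $\{1,2,n\}$ and $\{2,k,k+1\}$ for $3\le k\le n-1$; the only vertices of $G_n$ not incident to a diagonal, hence the only black ones, are $1$ and $3$; and step (iii) fuses the black vertex of the triangle $\{2,3,4\}$ with the boundary vertex $3$, and that of $\{1,2,n\}$ with the boundary vertex $1$. Thus $\mathcal{G}_{\Delta}$ is a ``fan'': the white boundary vertex $2$ is joined to black vertices $c_4,\dots,c_{n-1}$ and to the black boundary vertices $1$ and $3$, and each $c_k$ is in addition joined to the white boundary vertices $k$ and $k+1$, with $1$ and $3$ playing the roles of ``$c_n$'' and ``$c_3$''. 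Its $n-3$ internal faces are the quadrilaterals $Q_k$ on the vertices $2,c_k,k+1,c_{k+1}$ for $k=3,\dots,n-1$; all remaining faces are boundary faces. Since the plabic degree is independent of the perfect orientation, I would fix the (forced) one with source set $\{1,2\}$: the spokes $2\to 1$, $2\to 3$, $2\to c_k$ together with the zig-zag ``backbone'' $1\to n\to c_{n-1}\to (n-1)\to c_{n-2}\to\cdots\to c_4\to 4\to 3$.

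The point is that $\mathcal{G}_{\Delta}$ is a tree together with the spokes at $2$, so from any source there is almost no choice of route: the path leaving $1$ is forced to run along the backbone until it reaches its prescribed sink, while the path leaving $2$ is either a single spoke $2\to c_i\to i$ (for $4\le i\le n-1$), or $2\to 3$, or $2\to 1\to n$, or a spoke followed by a stretch of the backbone toward $3$. From this: for $P_{1j}$ and $P_{2j}$ with $j>2$ the minimal flow is a single path (the short $2$-path to $j$, resp.\ the forced $1$-path to $j$) which lies on the boundary side of every internal face, so its degree is $0$. For $P_{ij}$ with $2<i<j\le n$ the matching $1\to i$, $2\to j$ is infeasible, since the forced path out of $1$ already occupies every vertex $\ge j$ and hence blocks any path from $2$ to $j$; so every flow consists of the backbone path from $1$ to $j$ together with a path from $2$ to $i$, the cheapest such path being the spoke $2\to c_i\to i$ (or $2\to 3$ when $i=3$). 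One then checks that this flow has exactly the $n-j+1$ internal faces $Q_{j-1},Q_{j},\dots,Q_{n-1}$ to its left, whence $\ddeg_{\mathcal{G}_{\Delta}}(P_{ij})=n-j+1$, and that any competing flow (routing the $2$-path further around the fan) only picks up additional internal faces on that side.

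The hard part is this last step: one must pin down, for the concrete planar embedding of the fan, which of the $Q_k$ lie to the left of the backbone path from $1$ to $j$ and of the short $2$-path, and then verify minimality. I would organize the count by walking along the backbone vertex by vertex and observing that the contributions of the $Q_k$ telescope to $Q_{j-1},\dots,Q_{n-1}$; the extreme cases $i=3$, $j=n$, and $\{i,j\}=\{3,n\}$ deserve a separate look, since there a relevant $Q_k$ carries two boundary vertices. As a consistency check one can confirm that the resulting numbers satisfy the four-term relations of Proposition~\ref{x-prop}. Finally, this computation is exactly the base case for the inductive proof of the main theorem, with diagonal flips of $\Delta$ handled by local moves on $\mathcal{G}_{\Delta}$.
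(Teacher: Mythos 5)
Your setup is correct and is exactly what the paper leaves implicit (the paper states this proposition with no proof beyond the phrase ``by straightforward computations'' and the figure of the palm): the fan-shaped plabic graph, the forced perfect orientation with backbone $1\to n\to c_{n-1}\to (n-1)\to\cdots\to c_4\to 4\to 3$ and spokes out of $2$, the list of internal faces $Q_3,\dots,Q_{n-1}$, and the infeasibility of the matching $1\to i$, $2\to j$ are all right. The genuine error is in which flows are minimal: you have the two sides of the paths interchanged. Calibrating against the paper's own worked example after Definition~\ref{def:PD} (where the path from $2$ to $4$ that detours \emph{around} the internal face $\{1,3\}$ is the one of degree $0$, while the direct path through the middle has degree $1$), a path hugging the boundary \emph{clockwise} keeps all internal faces on its right, whereas a path cutting through the fan has faces on its left. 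Concretely, the spoke $2\to c_i\to i$ has exactly the faces $Q_i,Q_{i+1},\dots,Q_{n-1}$ on its \emph{left}, so it has degree $n-i$, not $0$ and not $n-j+1$; it certainly does not ``lie on the boundary side of every internal face.'' With the flows you exhibit, the computation yields $\ddeg_{\mathcal{G}_\Delta}(P_{1j})=n-j$ and $\ddeg_{\mathcal{G}_\Delta}(P_{ij})=n-i$, which contradicts the statement you are proving and, for the second family, even violates the four-point condition of Proposition~\ref{x-prop} (so your proposed consistency check would have caught this).

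The correct minimal flows are the ones routed as far clockwise as the occupied vertices permit. For $P_{2j}$ your argument stands: the forced path from $1$ along the backbone has every $Q_k$ on its right, degree $0$. For $P_{1j}$ the minimal path from $2$ is the \emph{long} route $2\to 1\to n\to c_{n-1}\to\cdots\to c_j\to j$ (for $j=n$ this is in fact the only route, since the unique incoming edge at $n$ comes from $1$); it has only boundary faces on its left, hence degree $0$. For $P_{ij}$ with $2<i<j$, the path from $1$ must end at $j$ and occupies $1,n,\dots,c_j,j$, so the path from $2$ to $i$ is $2\to c_m\to m\to c_{m-1}\to\cdots\to c_i\to i$ for some $i\le m\le j-1$, and this path has precisely $Q_m,\dots,Q_{n-1}$ on its left, i.e.\ degree $n-m$. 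The minimum is attained at $m=j-1$ (the spoke attached as close to the blocked vertex $j$ as possible), giving $n-j+1$, with the faces $Q_{j-1},\dots,Q_{n-1}$ you name --- but attached to the zig-zag flow, not to the spoke $2\to c_i\to i$. Your final monotonicity claim is likewise reversed: routing the $2$-path further around the fan \emph{removes} faces from its left rather than adding them.
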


By straightforward computations, Theorem \ref{Thm:main} holds in this case.

\subsubsection{Proof of Theorem \ref{Thm:main}}
The proof of the theorem will be executed by induction on $n$. The case $n=4$ contains only two different triangulations and can be verified directly.

We suppose from now on $n\geq 5$. First notice that there exists at least two black boundary vertices in the plabic $n$-gon $G_n^\Delta$ and vertices $1$ and $2$ can not be both black vertices. In fact, all neighbors of a black vertex are white vertices. Let $s$ be the black vertex different from $1$ and $2$ such that there is no black vertex in $\llbracket s+1,n \rrbracket$. Then $s-1$, $s+1,\ldots,n$ are all white vertices and $(s-1,s+1)\in\Delta_d$.

\begin{lemma}[Sector lemma]\label{Lem:Sector}
If $(s-1,p)\in\Delta_d$ for some $s+1<p\leq n$, then $(s-1,s+2),\ldots,(s-1,p-1)\in\Delta_d$.
\end{lemma}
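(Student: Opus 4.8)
The plan is to reduce the claim to a statement about the triangulation induced on a sub-polygon. Let $A$ be the sub-polygon of $G_n$ on the vertices $s-1,s,s+1,\ldots,p$ (in this cyclic order) cut off by the diagonal $(s-1,p)$, and let $\Delta_A$ denote the triangulation of $A$ induced by $\Delta$. The lemma will follow once we show that $\Delta_A$ is the \emph{fan with apex $s-1$}, i.e.\ that its diagonals are exactly $(s-1,s+1),(s-1,s+2),\ldots,(s-1,p-1)$; so it is enough to show that every diagonal of $\Delta_A$ is incident to $s-1$.

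Two facts will be used repeatedly. First, since there is no black boundary vertex in $\llbracket s+1,n\rrbracket$, every vertex of $\{s+1,\ldots,p-1\}$ is incident to a diagonal of $\Delta$; as $(s-1,p)\in\Delta$, no chord of $\Delta$ may cross it, so any such diagonal has both endpoints among the vertices of $A$ and is therefore a diagonal of $\Delta_A$. Hence each of $s+1,\ldots,p-1$ is incident to a diagonal of $\Delta_A$. Second, $s$ is black, so it is incident to no diagonal of $\Delta$, in particular to none of $\Delta_A$.

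Now suppose, for contradiction, that $\Delta_A$ has a diagonal $(a,b)$ (with $a<b$) not incident to $s-1$. Since it is not incident to $s-1$ and, by the second fact, not incident to $s$, we have $a,b\in\{s+1,\ldots,p\}$; and, being a genuine diagonal of $A$, $b\geq a+2$. Cutting $A$ along $(a,b)$ produces the sub-polygon $C$ on the vertices $a,a+1,\ldots,b$ (the side not containing $s-1$), in which $a$ and $b$ are adjacent. Each vertex of $\{a+1,\ldots,b-1\}$ lies in $\{s+2,\ldots,p-1\}\subseteq\llbracket s+1,n\rrbracket$, hence is incident to a diagonal of $\Delta$; such a diagonal crosses neither $(s-1,p)$ nor $(a,b)$, so it is a diagonal of the induced triangulation $\Delta_C$ of $C$. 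Thus in $\Delta_C$ every vertex other than the adjacent pair $\{a,b\}$ is incident to a diagonal. But if $C$ is a triangle, then its third vertex $a+1$ is incident to no diagonal; and if $C$ has at least four vertices, the set of vertices of $C$ incident to no diagonal of $\Delta_C$ has at least two elements, no two of which are adjacent, and so is not contained in $\{a,b\}$. Either way we reach a contradiction, so $\Delta_A$ is the fan at $s-1$, and the lemma follows.

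The ``no crossing'' assertions are routine, each reducing to the fact that $(s-1,p)$ and the cutting diagonal $(a,b)$ belong to $\Delta$ and are therefore crossed by no other chord of $\Delta$; the degenerate cases ($p=s+2$, where the conclusion is vacuous, and $C$ a triangle) require nothing beyond the above. The step I expect to carry the argument is the final one: noticing that the hypothesis ``no black vertex in $\llbracket s+1,n\rrbracket$'' forces the sub-polygon $C$ to carry a diagonal at each of its non-corner vertices, and then invoking the elementary ``two ears'' fact that any triangulation of a polygon with at least four vertices leaves at least two pairwise non-adjacent vertices incident to no diagonal.
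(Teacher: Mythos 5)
Your proof is correct and takes essentially the same route as the paper's: both derive a contradiction from the observation that a diagonal with both endpoints in $\{s+1,\ldots,p\}$ cuts off a sub-polygon which must contain an ear, i.e.\ a vertex incident to no diagonal, and hence a black vertex in $\llbracket s+1,n\rrbracket$. The paper locates the offending diagonal via the minimal $q$ with $(s-1,q)\notin\Delta_d$ and leaves the two-ears step implicit, whereas you argue directly that the induced triangulation of the cut-off polygon is a fan at $s-1$ and spell out the two-ears lemma; the substance is the same.
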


\begin{proof}
Let $q\in\llbracket s+2,p-1\rrbracket$ be the smallest integer such that $(s-1,q)\notin\Delta_d$. In this case, there exists a diagonal $(q,r)$ for some $r\in\llbracket q+1,p-1\rrbracket$. This is not possible since otherwise there must be at least one black vertex in $\llbracket q+1,r-1\rrbracket$.
\end{proof}

\begin{corollary}
If $s=3$, then $\Delta_d=\{(2,4),(2,5),\ldots,(2,n)\}$.
\end{corollary}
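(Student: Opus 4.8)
The plan is to combine the Sector lemma (Lemma~\ref{Lem:Sector}) with the coloring information already established in this subsection and with the elementary fact that a triangulation of the $n$-gon has exactly $n-3$ diagonals.

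The first and only non-formal step is to pin down which vertices of $G_n^\Delta$ are black. Since $s=3$, the vertices $4,5,\dots,n$ of $G_n$ are all white; moreover $(2,4)=(s-1,s+1)\in\Delta_d$, so vertex $2$ is white as well. Hence the only vertices of $G_n$ that can be black are $1$ and $3$. As there are at least two black boundary vertices in $G_n^\Delta$, both $1$ and $3$ must be black, i.e. incident to no diagonal of $\Delta$. In particular vertex $1$ is an ``ear'': the two boundary edges $(n,1)$ and $(1,2)$ of $G_n$ lie in a single triangle of $\Delta$, which is therefore $(n,1,2)$, and consequently $(2,n)\in\Delta_d$ (here we use $n\ge 5$, so that $(2,n)$ is a genuine diagonal and not an edge of $G_n$).

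Next I would feed $p=n$ into the Sector lemma; this is legitimate because $s+1=4<n$ and $(s-1,n)=(2,n)\in\Delta_d$. It yields $(2,5),(2,6),\dots,(2,n-1)\in\Delta_d$. Together with $(2,4)\in\Delta_d$ and $(2,n)\in\Delta_d$ this shows $\{(2,4),(2,5),\dots,(2,n)\}\subseteq\Delta_d$. This subset already has $n-3$ elements, and a triangulation of the $n$-gon has exactly $n-3$ diagonals, so the inclusion is an equality, which is the assertion of the corollary.

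Thus the whole argument hinges on the observation that the hypothesis $s=3$ forces vertex $1$ to be black; the rest is bookkeeping with the Sector lemma and the diagonal count. If one prefers to avoid invoking the ``two ears'' input directly, an equivalent route is to let $p$ be the largest index with $(2,p)\in\Delta_d$, apply the Sector lemma to get $(2,4),\dots,(2,p)\in\Delta_d$, and then examine the triangle of $\Delta$ lying on the side of the diagonal $(2,p)$ not containing vertex $3$: its third vertex $m$ lies in $\{1\}\cup\llbracket p+1,n\rrbracket$, and $m\in\llbracket p+1,n\rrbracket$ would give $(2,m)\in\Delta_d$, contradicting the maximality of $p$; hence $m=1$, so $(1,p)\in\Delta$, and for $p<n$ this is a diagonal incident to vertex $1$, contradicting that $1$ is an ear. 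Either way one concludes $p=n$.
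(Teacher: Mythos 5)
Your proof is correct and takes essentially the same route as the paper: with $s=3$ the only black vertices are $1$ and $3$, which forces $(2,4),(2,n)\in\Delta_d$, and the Sector Lemma then fills in $(2,5),\dots,(2,n-1)$. You additionally make explicit the ear argument at vertex $1$ that yields $(2,n)\in\Delta_d$ and the count of $n-3$ diagonals giving equality, both of which the paper leaves implicit.
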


\begin{proof}
When $s=3$, there are only two black vertices $1$ and $3$ in the plabic $n$-gon, which implies that $(2,4),(2,n)\in\Delta_d$. By the Sector Lemma, for any $r\in\llbracket 5,n-1 \rrbracket$, $(2,r)\in\Delta_d$.
\end{proof}

According to the corollary, if $s=3$, by Proposition \ref{Prop:Cartepillar2}, Theorem \ref{Thm:main} holds.
\par
In the following discussion, we suppose that $s\neq 3$.
The following lemma explains the local orientation on the square containing $s-1,s,s+1$ in the plabic graph $\mathcal{G}_\Delta$, see Figure~9. In the plabic graph $\mathcal{G}_\Delta$, we will use $1,2,\ldots,n$ to denote the internal vertices connected to boundary vertices $1,2,\ldots,n$.

First note that, as $s$ is a boundary black vertex, it has already one edge going out in the plabic graph $\mathcal{G}_\Delta$. Hence the edges in $\mathcal{G}_\Delta$ connecting $s-1$ and $s+1$ to $s$ have orientations pointing towards $s$.

Suppose that the theorem holds for any triangulation of the $(n-1)$-gon. Let $\overline{G}_n$ be the $(n-1)$-gon obtained from $G_n$ by cutting along the diagonal connecting $s-1$ and $s+1$. The triangulation $\Delta$ of $G_n$ induces a triangulation $\overline{\Delta}=\overline{\Delta}_d\cup\overline{\Delta}_e$ of $\overline{G}_n$ where
$$\overline{\Delta}_d=\Delta_d\backslash\{(s-1,s+1)\}\ \ \text{and}\ \ \overline{\Delta}_e=(\Delta_e\backslash\{(s-1,s),(s,s+1)\})\cup\{(s-1,s+1)\}.$$
We can associate to $\overline{G}_n$ and $\overline{\Delta}$ a plabic $(n-1)$-gon $\overline{G}_n^{\overline{\Delta}}$ and a plabic graph $\overline{\mathcal{G}}_{\overline{\Delta}}$.
\par
For $1\leq i<j\leq n$ and $i,j\neq s$, we denote $\ddeg_{\overline{\mathcal{G}}_{\overline{\Delta}}}(\overline{P}_{ij})$ and $\ddeg_X(\overline{P}_{ij})$ the corresponding degrees with respect to $\overline{\mathcal{G}}_{\overline{\Delta}}$ and $\overline{\Delta}$. If one of $i$ and $j$ equals $s$, we set these degrees to be zero. The connection numbers for $\overline{\Delta}$ will be denoted by $\overline{C}_{p,q}^{r,s}$. For $1\leq i<j\leq n$, we denote
$$v_{ij}=\ddeg_X(P_{ij})-\ddeg_X(\overline{P}_{ij})\ \ \text{and}\ \ w_{ij}=\ddeg_{\mathcal{G}_{\Delta}}(P_{ij})-\ddeg_{\overline{\mathcal{G}}_{\overline{\Delta}}}(\overline{P}_{ij}).$$
By induction hypothesis, to prove the theorem, it suffices to show that for any $1\leq i<j\leq n$, $v_{ij}=w_{ij}$.
\par
We start with the following lemma.

\begin{lemma}\label{Lem:count}
Suppose $2<i<s$ and $s<j\leq n$. The face of $\mathcal{G}_\Delta$ corresponding to the diagonal $(s-1,s+1)\in\Delta_d$ is to the left of any directed path from $1$ or $2$ to $i$, and to the right of any directed path from $1$ or $2$ to $j$.
\end{lemma}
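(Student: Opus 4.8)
The plan is to describe explicitly where the face $F$ of $\mathcal{G}_\Delta$ labelled by the diagonal $(s-1,s+1)$ sits relative to the paths in question, using the perfect orientation with source set $\{1,2\}$. First I would recall from Section~\ref{sec:maintheorem} how faces of $\mathcal{G}_\Delta$ inherit labels: the face $F$ corresponding to the triangle or diagonal $(s-1,s+1)$ is, in the plabic $n$-gon $G_n^\Delta$, the face bounded by the edges incident to the internal vertices $s-1$, $s$ (the black boundary vertex), and $s+1$; in particular it is the face adjacent to the boundary arc between boundary vertices $s-1$ and $s+1$ that contains $s$. The key structural input is the one established just before the lemma: since $s$ is a black boundary vertex it already has its unique outgoing edge to the boundary, so both edges $s{-}1 \to s$ and $s{+}1\to s$ in $\mathcal{G}_\Delta$ point toward $s$. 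Hence no directed path of the orientation can enter $F$ by crossing the $(s-1,s)$ or $(s,s+1)$ boundary of $F$ and then leave again; $F$ behaves like a "dead-end" region for directed paths.

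**Tracking the sink labels.** Next I would use the labelling of faces by the trip-path construction (label a face by $i$ if it lies to the left of the trip from $v_i$ to $v_{\pi_{k,n}(i)}$). Since the face $F$ corresponds to the diagonal $(s-1,s+1)$, its label as an $(n-k)$-subset records exactly which boundary indices $i$ have $F$ to their left along the $i$-trip. The combinatorial content of the Sector Lemma and the fact that $s-1,s+1,\dots,n$ are all white while $s$ is black pins down this label: $F$ is separated from the boundary only along the arc through $s$, so walking the boundary the set of indices whose trip keeps $F$ on the left is a cyclic interval, and one checks directly that it is $\{2,3,\dots,s-1\}$ together with the appropriate wrap (equivalently, $s\notin$ label, $1\notin$ label, while $i\in$ label for $2\le i\le s-1$). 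This is precisely the statement that $F$ lies to the left of the $i$-trip for $2<i<s$ and to the right of the $i$-trip for $s<i\le n$; and since in a perfect orientation with source $\{1,2\}$ every directed path from $1$ or $2$ to a sink $t$ runs "parallel" to the corresponding trips, $F$ lies to the left of every directed path $1\to i$ or $2\to i$ with $i<s$, and to the right of every directed path $1\to j$ or $2\to j$ with $j>s$.

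**The argument that $F$ cannot be "straddled".** The remaining point is to upgrade "lies to the left/right of the trip" to "lies to the left/right of every $J$-flow path", i.e. to rule out a directed path from $1$ or $2$ that wiggles around $F$ so that $F$ ends up on the far side. Here I would invoke the orientation fact from the previous paragraph: any directed path that touched the boundary of $F$ would have to cross one of the three edges bounding it; it cannot cross the two edges into $s$ against their orientation (both point into $s$, and a path cannot both enter and leave $F$ through $s$ since $s$ has no outgoing edge except to the boundary and is not one of $1,2$ or our sinks $i,j$), so a directed path either has $F$ entirely on one side or is forced to terminate. Combined with planarity and vertex-disjointness this gives uniqueness of the side, and matching it against the trip computation of the previous paragraph yields the claim.

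**Main obstacle.** The genuinely delicate step is the middle one: verifying that the face $F$ is a cyclic-interval "corner" cut off precisely along the arc through $s$, and identifying the interval as $\{2,\dots,s-1\}$. This rests on the assumption $s\neq 3$ (so $s-1>2$, making $2<i<s$ nonempty) and on the Sector Lemma, which controls which diagonals emanate from $s-1$; I expect the bulk of the work to be a careful case check that no diagonal of $\Delta$ separates $F$ from the arc $\llbracket 2, s-1\rrbracket$ on one side nor from $\llbracket s+1,1\rrbracket$ on the other, so that the trip labels behave as claimed. Once that is in place, the orientation "dead-end" argument is routine.
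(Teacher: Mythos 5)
Your key observation --- that both internal edges at $s$ point toward $s$, so no directed path in the perfect orientation can pass through $s$ --- is exactly the engine of the paper's proof, which is only a few sentences long: if a directed path from $1$ or $2$ to $j$ had the face $F$ corresponding to $(s-1,s+1)$ on its left, it would have to pass through the vertex $s$, which is impossible since all arrows at $s$ not connecting to the boundary point into $s$; the case of $i$ is symmetric. What determines the ``default'' side is pure planarity, not trips: $F$ is separated from the boundary arc through $s$ only by the two edges into $s$, so if a directed path avoids $s$ then $F$ lies on the same side of the path as that boundary arc, and which side that is (left for sinks $i<s$, right for sinks $j>s$) is read off directly from the positions of the endpoints $1,2,i,j$ on the boundary circle. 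No further input is needed.

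The genuine gap in your write-up is the middle step. You propose to determine the side via the trip-permutation face labels and then transfer this to flow paths because directed paths ``run parallel to the corresponding trips.'' This correspondence is not established anywhere in the paper and is not an argument: the lemma quantifies over \emph{all} directed paths to $i$ or $j$, whereas the trip label of a face is a single combinatorial datum, and nothing in Section 3 relates the side of an arbitrary path in a $J$-flow to the side of a trip. Moreover the label you propose for $F$, essentially $\{2,\dots,s-1\}$, cannot be a face label, since face labels are $(n-2)$-element subsets of $\{1,\dots,n\}$. You yourself flag the identification of this label as ``the bulk of the work'' and leave it undone, and you invoke the Sector Lemma, which plays no role in this lemma. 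If you delete the trip-label paragraph and replace it with the planarity observation above, your first and third paragraphs combine into a correct proof along essentially the paper's lines.
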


\begin{proof}
If there exists a directed path from $1$ or $2$ to $j$ such that this face is to the left of the path, then it passes through the vertex $s$. This is not possible, since all arrows at $s$ not connecting to the boundary go towards $s$.
\par
The proof of the statement on $i$ is similar.
\end{proof}

The rest of this section is devoted to proving that  $\ddeg_{\mathcal{G}_{\Delta}}(P_{ij})=\ddeg_X(P_{ij})$ for all $1\leq i<j\leq n$. There are several cases to be analysed:
\begin{enumerate}[(i)]
\item $\mathit{i=1<j<s}.$ By Lemma~\ref{Lem:count}, we have $w_{1j}=1$
and $$v_{1j}=C_{j,1}^{j,1}+C_{1,1}^{2,j-1}-\overline{C}_{j,1}^{j,1}-\overline{C}_{1,1}^{2,j-1}=C_{j,1}^{j,1}-\overline{C}_{j,1}^{j,1}=1.$$
\item $\mathit{i=1<s<j\leq n}$.  By Lemma~\ref{Lem:count}, we have $w_{1j}=0$  and a similar argument as above shows that $v_{1j}=0$.
\item $\mathit{1=i<j=s<n}$. We need to show that $\ddeg_{\mathcal{G}_{\Delta}}(P_{1s})=\ddeg_X(P_{1s})$. Notice that a directed path from $2$ to $s$ must pass through either $s-1$ or $s+1$. Since there always exists a directed path from $2$ to $s+1$, by minimality, we have $\ddeg_{\mathcal{G}_{\Delta}}(P_{1s})=\ddeg_{\mathcal{G}_{\Delta}}(P_{1,s+1})$. On the other hand, since there is no diagonal meeting $s$, we have $C_{s,1}^{s,1} = C_{s+1,1}^{s+1,1}$ and $C_{1,1}^{2,s-1} =
C_{1,1}^{2,s}$. It follows that
$$\ddeg_X(P_{1s})=C_{s,1}^{s,1}+C_{1,1}^{2,s-1}=C_{s+1,1}^{s+1,1}+C_{1,1}^{2,s}=\ddeg_X(P_{1,s+1}).$$ Now the claim follows from the case $s<j=s+1\leq n$.
\item $\mathit{1=i<j=s=n}$. In this case, a directed path from $2$ to $s$ must pass through $s-1=n-1$, since it cannot pass through $1$. Therefore  $\ddeg_{\mathcal{G}_{\Delta}}(P_{1n})=\ddeg_{\mathcal{G}_{\Delta}}(P_{1,n-1})$. As $(1,n-1)\in \Delta_d$, we have $C_{n,1}^{n,1}=0$ and $C_{1,1}^{n-1,n-1}=C^{n-1,1}_{n-1,1}$. We can hence apply Lemma  \ref{Lem:C-nb} (2) and obtain
$$\ddeg_X(P_{1n})=C_{n,1}^{n,1} + C_{1,1}^{2,n-1}=C_{1,1}^{2,n-2}+C^{n-1,1}_{n-1,1} =\ddeg_X(P_{1,n-1}).$$
The statement follows from Case (i).
\item $\mathit{i=2<j\leq s}$. This(ese) case(s) can be examined in a similar manner as the corresponding cases for $i=1$.
\item $\mathit{i=2<s+1\leq j}$. The proof of this case is similar to the proof of Case (i). Nevertheless, we will repeat the argument since this case will be applied to prove Case (xii): by Lemma~\ref{Lem:count}, we have $w_{2j}=0$. On the other hand,
 $v_{2j}=C_{j,1}^{j,1}-\overline{C}_{j,1}^{j,1}=0$,
since there are no diagonals of $\Delta_d$ (or $\overline{\Delta}_d$) entirely contained in $\llbracket j,1\rrbracket$.
\item $\mathit{2<i<s<j}$. By Lemma~\ref{Lem:count}, $w_{ij}=1$. By definition, $v_{ij}=(C_{i,1}^{i,1}-\overline{C}_{i,1}^{i,1})+(C_{j,i-1}^{j,1}-\overline{C}_{j,i-1}^{j,1})$. Since $i\neq s$, the second bracket gives zero. The first bracket gives $1$, as the diagonal $(s-1,s+1)$ is no longer in $\overline{\Delta}$.
\item $\mathit{2<s<i<j}$. By Lemma~\ref{Lem:count}, $w_{ij}=0$. A similar argument as above shows $v_{ij}=0$.
\item $\mathit{2<i<j<s}$. By Lemma~\ref{Lem:count}, $w_{ij}=2$.  A similar argument as above shows $v_{ij}=2$.
\item $\mathit{2<i=s<j=s+1}$. We consider directed paths from $1$ to $s+1$ and from $2$ to $s$. since the vertex $s+1$ is occupied, to reach the vertex $s$, the path from $2$ to $s$ is forced to go through $s-1$, which shows $\ddeg_{\mathcal{G}_{\Delta}}(P_{s,s+1})=\ddeg_{\mathcal{G}_{\Delta}}(P_{s-1,s+1})$. By Case (i) we have proved, $\ddeg_{\mathcal{G}_{\Delta}}(P_{s-1,s+1})=\ddeg_X(P_{s-1,s+1})$. It suffices to show that $\ddeg_X(P_{s,s+1})-\ddeg_X(P_{s-1,s+1})=0$. Since $s-1>2$, the left hand side reads
$$C_{s,1}^{s,1}+C_{s+1,s-1}^{s+1,1}-C_{s-1,1}^{s-1,1}-C_{s+1,s-2}^{s+1,1}.$$
By applying Lemma \ref{Lem:C-nb} several times,  we obtain
\begin{align*}
\ddeg_X(P_{s,s+1})-\ddeg_X(P_{s-1,s+1})&=C_{s,1}^{s,1}+C_{s+1,s-1}^{s+1,1}-C_{s-1,1}^{s-1,1}-C_{s+1,s-2}^{s+1,1}\\
&=C_{s-1,1}^{s-1,1}-C^{s,1}_{s-1,s-1}+C_{s+1,s-1}^{s+1,1}-C_{s-1,1}^{s-1,1}-C_{s+1,s-2}^{s+1,1}\\
&=-C^{s,s}_{s-1,s-1}-C^{s+1,1}_{s-1,s-1}+C_{s+1,s-1}^{s+1,1}-C_{s+1,s-2}^{s+1,1}\\
&=-C^{s,s}_{s-1,s-1}-C^{s+1,1}_{s+1,s-1}+C_{s+1,s-1}^{s+1,1}=-C^{s,s}_{s-1,s-1},
\end{align*}
where the first two equalities follow from point (4) and (2) of Lemma  \ref{Lem:C-nb}, respectively, and the third one by combining point (1) and (2) of Lemma  \ref{Lem:C-nb}.
Since there is no diagonal touching $s$, the connection number $C_{s-1,s-1}^{s,s}$ is zero and the statement follows.
\item

 $\mathit{i=s<s+1<j}$.
We claim that $\ddeg_{\mathcal{G}_{\Delta}}(P_{sj}) = \ddeg_{\mathcal{G}_{\Delta}}(P_{s+1,j})$. To compute these degrees, we have to consider directed paths from $1$ to $j$ and from $2$ to $s$. Note that the path of smallest degree from $1$ to $j$ will be the same for both calculations.

Now consider paths from $2$ to $s$ or $s+1$. As all edges in $\mathcal{G}_{\Delta}$ meeting $s+1$ connect to black vertices, there is a unique black vertex $v$ such that the edge connecting $v$ and $s+1$ goes towards $s+1$.
See Figure \ref{fig:pic1} for an example. Let $(p,q,s+1)$ be the triangle in $\Delta$ corresponding to $v$ and assume that $p<q$. then $p\leq s-1$. Since $v$ has an outgoing edge to $s+1$, the edge between $p$ and $v$ is directed from $p$ to $v$. Thus the plabic graph has a path $p\to v\to s+1$ and as $p$ and $s+1$ are boundary vertices, and $s+1$ can only have one incoming vertex, every path from $2$ to $s$ or $s+1$ has to pass through $p$.
Note that the path of lowest degree must end with $p\to v\to s+1 \to s$, so the claim follows.
\par
By Case (viii), $\ddeg_{\mathcal{G}_{\Delta}}(P_{s+1,j})=\ddeg_X(P_{s+1,j})$, hence it suffices to show that $\ddeg_X(P_{sj})=\ddeg_X(P_{s+1,j})$. Their difference is given by:
$$(C_{s,1}^{s,1}-C_{s+1,1}^{s+1,1})+(C_{j,s-1}^{j,1}-C_{j,s}^{j,1}).$$
As there is no diagonal touching the vertex $s$,
we have $C_{s,1}^{s,1} = 0$ and $C_{j,s-1}^{j,1}=C_{j,s}^{j,1}$. It follows from our assumptions on $s$ that $C_{s+1,1}^{s+1,1}=0$.

\begin{figure}
\begin{tikzpicture}[scale=0.4]
\draw (3,13) -- (6,14) -- (9,13);
\draw (11,10) -- (12,8);
\draw (11,5) -- (9,3.5) -- (6,2) -- (3,3.5) -- (1,5);
\draw (0.5,10) -- (0,8) -- (2,12) -- (9,3.5) -- (0,8);
\draw (9,3.5) -- (3,3.5);

\draw[dashed] (9,13) -- (11,10);
\draw[dashed] (12,8) -- (11,5);
\draw[dashed] (1,5) -- (0,8);
\draw[dashed] (0.5,10) -- (2,12) -- (3,13);

\draw[blue, thick, ->] (2,12) -- (2,10);
\draw[blue, thick] (2,10) -- (2,8);
\draw[blue, thick, ->] (0,8) -- (1,8);
\draw[blue, thick] (1,8) -- (2,8);
\draw[blue, thick, ->] (2,8) -- (5.5,5.75);
\draw[blue, thick] (5.5,5.75) -- (9,3.5);
\draw[blue, thick, ->] (9,3.5) -- (9.5,1.75);
\draw[blue, thick] (9.5,1.75) -- (10,0);
\draw[blue, thick, ->] (6,2) -- (6,1);
\draw[blue, thick] (6,1) -- (6,-1);
\draw[blue, thick, ->] (3,3.5) -- (2.5,1.75);
\draw[blue, thick] (2.5,1.75) -- (2,0);
\draw[blue, thick, ->] (9,3.5) -- (7.5,3);
\draw[blue, thick] (7.5,3) -- (6,2.1);
\draw[blue, thick, ->] (3,3.5) -- (4.5,3);
\draw[blue, thick] (4.5,3) -- (6,2.1);

\draw [thick, red] (6,14) to [out=-90,in=-180] (7,11)
to [out=0,in=180] (9,9) to [out=0,in=-135] (12,8) ;

\draw[fill, white] (2,12) circle [radius=0.25]; 
    \draw (2,12) circle [radius=0.25];
    \node[above left] at (2,12) {$p$};
\draw[fill, white] (0,8) circle [radius=0.25]; 
    \draw (0,8) circle [radius=0.25];
    \node[above left] at (0,8) {$q$};
\draw[fill, white] (3,3.5) circle [radius=0.25];
    \draw (3,3.5) circle [radius=0.25];
    \node[left] at (3,3.5) {$s-1$};
\draw[fill] (6,2) circle [radius=0.25]; 
    \node[right] at (6,2) {$s$};
\draw[fill, white] (9,3.5) circle [radius=0.25]; 
    \draw (9,3.5) circle [radius=0.25];
    \node[right] at (9,3.5) {$s+1$};
\draw[fill] (2,8) circle [radius=0.25]; 
    \node[right] at (2,8) {$v$};
\draw[fill] (6,14) circle [radius=0.1]; 
    \node[above] at (6,14) {$1$};
\draw[fill] (3,13) circle [radius=0.1]; 
    \node[above left] at (3,13) {$2$};
\draw[fill] (12,8) circle [radius=0.1]; 
    \node[right] at (12,8) {$j$};
\draw[fill] (11,5) circle [radius=0.1]; 
    \node[right] at (11,5) {$s+2$};
\draw[fill] (1,5) circle [radius=0.1]; 
    \node[left] at (1,5) {$s-2$};
\end{tikzpicture}

  \caption{A picture for Case (xi)}
  \label{fig:pic1}
\end{figure}
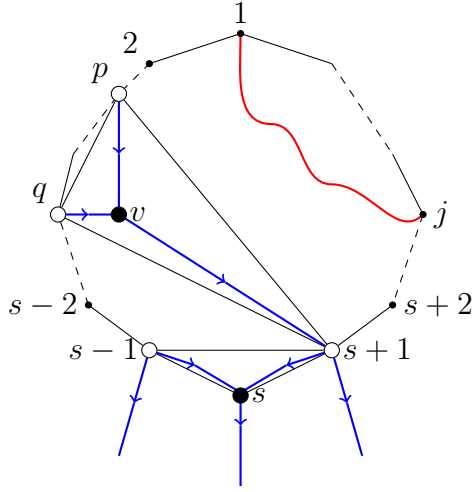
\item$\mathit{i<j=s<n}$. From Case (vi), we deduce that $\ddeg_{\mathcal{G}_{\Delta}}(P_{2,s+1})=C^{s+1,1}_{s+1,1}=0$. This implies that we can find a path from $1$ to $s+1$ of plabic degree 0. Moreover, since there is an edge between $s+1$ and $s$ pointing towards the latter, we have just shown that there exists a path from 1 to $s$ which does not contribute to the plabic degree of $P_{is}$. It follows that $\ddeg_{\mathcal{G}_{\Delta}}(P_{is})=\ddeg_{\mathcal{G}_{\Delta}}(P_{i,s+1})$.
 By Case (vii), $\ddeg_{\mathcal{G}_{\Delta}}(P_{i,s+1})=\ddeg_X(P_{i,s+1})$, hence it suffices to show that $\ddeg_X(P_{is})=\ddeg_X(P_{i,s+1})$: this follows from Lemma \ref{Lem:C-nb} (3).

\item  $\mathit{i<j=s=n}$.  If $s=n$, then $(1, n-1)\in\Delta_d$ and the edge between $1$ and $n$ has to point towards $n$, since $1$ is a white vertex and has already an edge going in. This implies that only the path from 2 to $i$ contributes to the plabic degree of $P_{in}$ and hence $\ddeg_{\mathcal{G}_{\Delta}}(P_{in})=\ddeg_{\mathcal{G}_{\Delta}}(P_{1i})$. On the other hand, $\ddeg_X(P_{1i})=C^{i,1}_{i,1}+C^{2,i-1}_{1,1}$ and $\ddeg_X(P_{in})=C^{i,1}_{i,1}+C^{n,1}_{n,i-1}$. Since there is no diagonal meeting the vertex $n$,  the connection numbers $C^{2,i-1}_{1,1}$ and $C^{n,1}_{n,i-1}=C_{n,1}^{n,i-1}$ coincide. To conclude we hence have to show that $\ddeg_X(P_{1i})=\ddeg_{\mathcal{G}_{\Delta}}(P_{1i})$, but this has been dealt with in Case (i).

\end{enumerate}


\section{Mutations}
In this section, we describe how the binomial ideals are changed with respect to mutations. For a triangulation of the $n$-gon, the mutation at a vertex is well-known. Let $(a,b) \in \Delta_d$, then there are unique vertices $c,d$ such that $a,b,c,d$ form a $4$-gon with diagonal $(a,b)$. The mutation at $(a,b)$ changes the diagonal in the $4$-gon to $(c,d)$ (see Figure~\ref{triangulation:mutation}).

\begin{center}
\begin{figure}
\begin{tikzpicture}[scale=0.4]
\node[below] at (3,1) {4};
\node[below, red] at (6,1) {5};
\node[right, red] at (8,3) {6};
\node[right] at (8,5) {7};
\node[above] at (6,7) {8};
\node[above, red] at (3,7) {1};
\node[left] at (1,5) {2};
\node[left, red] at (1,3) {3};

\draw (3,1) --(6,1) -- (8,3) -- (8,5) -- (6,7) -- (3,7) -- (1,5) -- (1,3) -- (3,1);
\draw (6,1) -- (1,3) -- (8,3) -- (3,7) -- (8,5);
\draw (1,3) -- (3,7);
\draw[red, ultra thick] (1,3) -- (8,3);
\node[below] at (10.5,4.5) {$\Longrightarrow$};

\begin{scope}[xshift=12cm, scale=1]
\node[below] at (3,1) {4};
\node[below, red] at (6,1) {5};
\node[right, red] at (8,3) {6};
\node[right] at (8,5) {7};
\node[above] at (6,7) {8};
\node[above, red] at (3,7) {1};
\node[left] at (1,5) {2};
\node[left, red] at (1,3) {3};

\draw (3,1) --(6,1) -- (8,3) -- (8,5) -- (6,7) -- (3,7) -- (1,5) -- (1,3) -- (3,1);
\draw (6,1) -- (1,3);
\draw (8,3) -- (3,7) -- (8,5);
\draw (1,3) -- (3,7);
\draw[red, ultra thick] (6,1) -- (3,7);

\end{scope}
\end{tikzpicture}
\caption{Mutation at the diagonal $(3,6)$.}
\label{triangulation:mutation}
\end{figure}
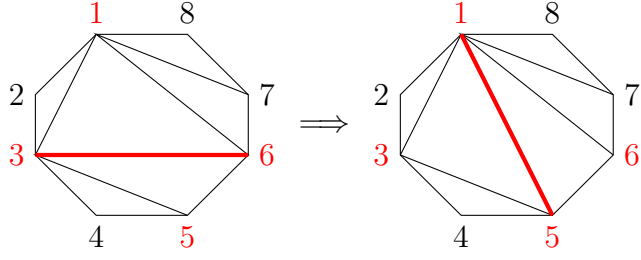
\end{center}

We translate this notion of mutation to mutations of rooted trees as follows:
\par
Let $\Delta$ be a triangulation of the $n$-gon $G_n$ and $T_{\Delta}$ the associated labelled tree. Recall that $T_\Delta$ is a rooted tree with root corresponding to the leaf labelled by $1$ and counterclockwise labelling of the leaves $1,2, \ldots, n$. Then $T_{\Delta}$ can be seen as a directed graph, where an edge $\ba - \bb$ gets an orientation $\ba \longrightarrow \bb$, if the distance of $\ba$ to $1$ is less than the distance of $\bb$ to $1$.

Let $\ba$ be an internal vertex of $T_{\Delta}$ and $\ba \longrightarrow \bb, \ba \longrightarrow \bc$ be the adjacent edges. We say $\bc$ is the \textit{left child} of $\ba$ and $\bb$ is the \textit{right child} of $\ba$ if the labels of the leaves reachable by a directed path (with respect to orientation) from $\bb$ are smaller than those reachable from $\bc$, having in mind that leaves are labelled counterclockwise by $1,\cdots,n$ (see Figure~\ref{tree:mutation}).

\begin{definition}
Let $\ba \longrightarrow \bb$ be an inner edge of $T_{\Delta}$ and $\bb$ be the right child of $\ba$. We further denote $\bc$ the left child of $\ba$, $\bd$ the right child of $\bb$ and $\be$ the left child of $\bb$ (see Figure~\ref{tree:mutation}). The rooted tree $(\ba \rightarrow \bb)T_\Delta$ is the tree obtained from $T_\Delta$ by defining $\bd$ to be the right child of $\ba$, $\bb$ to be the left child of $\ba$, $\bc$ to be the left child of $\bb$ and $\be$ to be the right child of $\bb$. All other edges of the graph remain the same and also the labelling remains the same.
\end{definition}
\bigskip
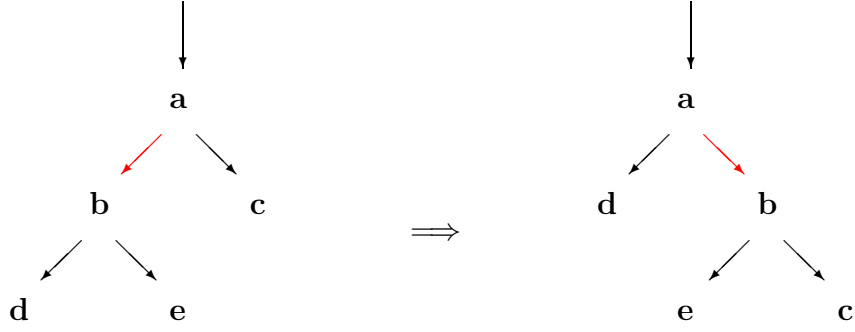
\begin{figure}[h]
\begin{picture}(310,130)
\put(65,130){$\vector(0,-1){25}$}
\put(60,90){$\ba$}
\put(57,80){$\color{red}\vector(-1,-1){15}$}
\put(30,50){$\bb$}
\put(27,40){$\vector(-1,-1){15}$}
\put(0,10){$\bd$}
\put(40,40){$\vector(1,-1){15}$}
\put(60,10){$\be$}
\put(70,80){$\vector(1,-1){15}$}
\put(90,50){$\bc$}

\put(150,40){$\Longrightarrow $}

\put(255,130){$\vector(0,-1){25}$}
\put(250,90){$\ba$}
\put(247,80){$\vector(-1,-1){15}$}
\put(220,50){$\bd$}
\put(260,80){$\color{red}\vector(1,-1){15}$}
\put(280,50){$\bb$}
\put(277,40){$\vector(-1,-1){15}$}
\put(250,10){$\be$}
\put(290,40){$\vector(1,-1){15}$}
\put(310,10){$\bc$}

\end{picture}
\caption{Mutation on trees}
\label{tree:mutation}
\end{figure}

\begin{remark}
This mutation coincides with the one introduced by Buczynska and Wisniewski in \cite{BW} for more general phylogenetic trees.
\end{remark}

\begin{remark}\label{rem-tree-mut}
Let $(a_r, b_r) \in \Delta_d$ be the diagonal corresponding to $\ba \longrightarrow \bb$ and $(a_r', b_r')$ be the diagonal obtained by mutating at $(a_r, b_r)$. Set $\Delta'$ be the triangulation of the $n$-gon obtained through the diagonals
\[
\Delta'_d = \Delta_d \cup \{(a_r', b_r')\} \setminus \{(a_r, b_r)\}.
\]
Then, by construction,
\[
T_{\Delta'} =  (\ba \rightarrow \bb)T_{\Delta}.
\]
\end{remark}

\begin{lemma}
Given two arbitrary trees $T_\Delta$ ad $T_{\Delta'}$, then there is a sequence of mutations on inner edges transforming $T_{\Delta}$ into $T_{\Delta'}$.
\end{lemma}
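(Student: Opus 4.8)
The plan is to translate the statement into the language of triangulations via Remark~\ref{rem-tree-mut} and then invoke the classical connectivity of the \emph{flip graph} of a convex polygon. Under the bijection $\Delta\mapsto T_\Delta$, the inner edges of $T_\Delta$ (those joining two trivalent vertices) correspond bijectively to the diagonals in $\Delta_d$: two trivalent vertices of $T_\Delta$ are joined by an edge exactly when the corresponding triangles of $\Delta$ are adjacent, and adjacent triangles of a triangulation of $G_n$ meet along a diagonal. Remark~\ref{rem-tree-mut} says that the mutation $(\ba\to\bb)T_\Delta$ is precisely $T_{\Delta'}$, where $\Delta'$ is obtained from $\Delta$ by flipping the diagonal attached to the edge $\ba\to\bb$. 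Hence it suffices to prove that any two triangulations $\Delta,\Delta'$ of $G_n$ are connected by a finite sequence of diagonal flips; lifting such a sequence through the bijection and applying Remark~\ref{rem-tree-mut} at each step yields the desired sequence of inner-edge mutations. Here one also uses that a diagonal flip is an involution, so that an inner-edge mutation is undone by an inner-edge mutation and a path from $\Delta$ to $\Delta'$ may be traversed in either direction.

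For the connectivity of the flip graph I would argue by induction on $n$, the cases $n=3,4$ being immediate. Fix the vertex $1$ and let $F_n$ be the fan triangulation of $G_n$ with diagonals $(1,3),(1,4),\dots,(1,n-1)$. Since flips are reversible it is enough to transform an arbitrary triangulation $\Delta$ into $F_n$; a path $\Delta\to\Delta'$ is then obtained as $\Delta\to F_n\to\Delta'$. Let $(1,2,p)$ be the triangle of $\Delta$ containing the boundary edge $(1,2)$. If $p>3$ then $(2,p)$ is a diagonal, and the triangle on the side of $(2,p)$ not containing $1$ is some $(2,r,p)$ with $3\le r\le p-1$; flipping $(2,p)$ inside the quadrilateral on $1,2,r,p$ replaces it by $(1,r)$, so the triangle at the edge $(1,2)$ becomes $(1,2,r)$ with strictly smaller third vertex. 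Iterating, after finitely many flips $(1,3)$ is a diagonal of the current triangulation; its remaining diagonals triangulate the $(n-1)$-gon on $1,3,4,\dots,n$, and they are flipped to the fan at $1$ by the induction hypothesis (a flip inside this sub-polygon is a flip in $G_n$). Alternatively one may simply quote the standard fact that the $1$-skeleton of the associahedron is connected.

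The argument presents no real obstacle; the one point that needs care is the bookkeeping in the first paragraph, namely checking that the bijection $\Delta\leftrightarrow T_\Delta$ sends \emph{every} diagonal flip to a \emph{single} inner-edge mutation and conversely --- in particular that the roles of $\ba$ and $\bb$ in the definition of $(\ba\to\bb)T_\Delta$ are always determined (the inner edge is oriented away from the root, and ``right child'' is well defined from the counterclockwise leaf labelling), and that performing the mutation at the flipped diagonal in $T_{\Delta'}$ returns $T_\Delta$. This is exactly the content of Remark~\ref{rem-tree-mut} together with the involutivity of diagonal flips, so no work is required beyond recording it.
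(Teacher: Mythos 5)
Your proposal is correct and follows the same route as the paper: reduce to connectivity of the flip graph of triangulations of $G_n$ and transport the result through Remark~\ref{rem-tree-mut}. The only difference is that the paper simply quotes flip-connectivity as known, whereas you also supply the standard fan-triangulation induction proving it.
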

\begin{proof}
This is true for the triangulations $\Delta$ and $\Delta'$ on the $n$-gon $G_n$ and hence by Remark~\ref{rem-tree-mut} for the labelled trees $T_\Delta$ and $T_{\Delta'}$.
\end{proof}

Recall the tree degree $\operatorname{deg}_{T_{\Delta}} (P_{ij})$ of a Pl\"ucker coordinate as the number of internal edges between the leaves $i < j$ in the tree $T_{\Delta}$. For a fixed triangulation $\Delta$ we denote the ideal generated by all these binomial relations $I_{T_{\Delta}}$ (recall that one obtains the same ideal by using the plabic degree). We want to study how this ideal is changed under mutation.
\par
Let $\ba \longrightarrow \bb$ be an inner edge of $T_{\Delta}$, and we keep the same notations as in Figure~\ref{tree:mutation} with $\bc, \bd, \be$. We decompose the leaves of $T_{\Delta}$ with respect to the fixed inner edge:
\begin{itemize}
\item $\mathcal{A}$ is the set of all leaves $i\leq \ba$.
\item $\mathcal{D}$ is the set of all leaves $i\geq \bd$.
\item $\mathcal{E}$ is the set of all leaves $i\geq \be$.
\item $\mathcal{C}$ is the set of all leaves $i\geq \bc$.
\end{itemize}
Here $\mathbf{x} \leq \mathbf{y}$, if there is a directed path in $T_{\Delta}$ from $\mathbf{x}$ to $\mathbf{y}$. Again, we denote the triangulation obtained by mutating at $\ba \longrightarrow \bb$ by $\Delta'$.

\begin{theorem}\label{Thm:muta}
Let $\Delta$ be a triangulation of the $n$-gon $G_n$, $\ba \longrightarrow \bb$ be an inner edge, $\Delta'$ the triangulation obtained from $\Delta$ by mutating at this edge and $1 \leq i < j < k < l \leq n$. The degenerate Pl\"ucker relation in the indices $i,j,k,l$ is the same for $I_{T_\Delta}$ and $I_{T_\Delta' }$ if and only if $\{i,j,k,l\} \cap \mathcal{Y} = \emptyset$ for at least one subset $\mathcal{Y}\in \{\mathcal{A,D,E,C} \}$,  otherwise, if $i \in \mathcal{A}, j \in \mathcal{D}, k \in \mathcal{E}, l \in \mathcal{C}$,
\[
P_{ij} P_{kl} - P_{ik}P_{jl} \in I_{T_{\Delta}} \text{  and  } P_{il}P_{jk} - P_{ik}P_{jl} \in I_{T_{\Delta'}}.
\]
\end{theorem}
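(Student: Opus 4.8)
The plan is to understand exactly which degenerate Plücker relation (i.e.\ which of the three monomials $P_{ij}P_{kl}$, $P_{ik}P_{jl}$, $P_{il}P_{jk}$ gets killed) is determined by the tree degree $\ddeg_{T_\Delta}$, and then track how this changes under the single mutation $\ba\to\bb$. By Proposition~\ref{Prop:treeA} the tree degree equals the A-degree, and Proposition~\ref{a-prop} tells us precisely that the pair among $\{a_{ij}+a_{kl},\,a_{ik}+a_{jl},\,a_{il}+a_{jk}\}$ that attains the maximum (hence gives the relation $I_{T_\Delta}$ contains) is governed by the vanishing of the connection numbers $C_{j,k-1}^{l,i-1}$ and $C_{i,j-1}^{k,l-1}$ — exactly one of which is nonzero in a triangulation. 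So the degenerate relation for $\{i,j,k,l\}$ is completely encoded by \emph{which} diagonals of $\Delta$ separate $\llbracket i,j-1\rrbracket$ from $\llbracket k,l-1\rrbracket$ versus $\llbracket j,k-1\rrbracket$ from $\llbracket l,i-1\rrbracket$.

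First I would reduce to a statement purely about diagonals: the degenerate Plücker relation for $i<j<k<l$ changes under mutation at $(a_r,b_r)$ if and only if the \emph{single} diagonal $(a_r,b_r)$ being deleted (resp.\ the new diagonal $(a_r',b_r')$ being inserted) flips which of the two connection numbers is nonzero. Since mutation only changes one diagonal, this happens iff $(a_r,b_r)$ is the \emph{unique} diagonal separating one of the two arcs-pairs while $(a_r',b_r')$ separates the other. Geometrically, the quadrilateral $\ba,\bb,\bc,\bd,\be$ in the tree picture corresponds to a decomposition of the $n$-gon into four arcs; in the tree language these are exactly the leaf-sets $\mathcal A,\mathcal D,\mathcal E,\mathcal C$ meeting at the mutated edge. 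The key combinatorial claim is: the relation changes precisely when $\{i,j,k,l\}$ has one element in each of the four arcs $\mathcal A,\mathcal D,\mathcal E,\mathcal C$, and in cyclic order these four arcs appear as $\mathcal A,\mathcal D,\mathcal E,\mathcal C$ (which forces $i\in\mathcal A$, $j\in\mathcal D$, $k\in\mathcal E$, $l\in\mathcal C$ after relabelling by the cyclic order), and otherwise at least one arc is missed so that both the old and new separating diagonals agree on the relevant connection numbers.

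The main work, and the step I expect to be the obstacle, is verifying the direction ``if $\{i,j,k,l\}$ meets all four arcs in the correct cyclic pattern, then the relation genuinely flips, and moreover flips in the stated way'' — that is, computing that $P_{ij}P_{kl}-P_{ik}P_{jl}\in I_{T_\Delta}$ becomes $P_{il}P_{jk}-P_{ik}P_{jl}\in I_{T_{\Delta'}}$ and \emph{not} some other reshuffling. For this I would argue that with $i\in\mathcal A$, $j\in\mathcal D$, $k\in\mathcal E$, $l\in\mathcal C$, the deleted diagonal $(a_r,b_r)$ separates $\mathcal A\cup\mathcal C$ from $\mathcal D\cup\mathcal E$, hence separates $\{i\}\cup$(part of $\llbracket l,i-1\rrbracket$) from $\{j,k\}$, making it contribute to $C_{j,k-1}^{l,i-1}$; after mutation $(a_r',b_r')$ separates $\mathcal A\cup\mathcal D$ from $\mathcal E\cup\mathcal C$, so it stops contributing to $C_{j,k-1}^{l,i-1}$ and instead the other connection number $C_{i,j-1}^{k,l-1}$ picks up — noting crucially that $(a_r,b_r)$ was the \emph{only} diagonal straddling in that way (because the four arcs are each ``full'' on one side of the edge, there is no room for a second such diagonal). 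Then Proposition~\ref{a-prop}(1) applied to $\Delta$ gives $a_{ij}+a_{kl}>a_{il}+a_{jk}$, i.e.\ $P_{ij}P_{kl}-P_{ik}P_{jl}\in I_{T_\Delta}$, while Proposition~\ref{a-prop}(2) applied to $\Delta'$ gives $a'_{il}+a'_{jk}>a'_{ij}+a'_{kl}$, i.e.\ $P_{il}P_{jk}-P_{ik}P_{jl}\in I_{T_{\Delta'}}$.

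For the converse (``relation unchanged'' when some arc is missed) I would do a short case-check: if $\{i,j,k,l\}$ avoids $\mathcal A$, all four indices lie on the $\bb$-side of the mutated edge, so neither $(a_r,b_r)$ nor $(a_r',b_r')$ separates any pair among $\{\llbracket i,j-1\rrbracket,\llbracket j,k-1\rrbracket,\llbracket k,l-1\rrbracket,\llbracket l,i-1\rrbracket\}$ in a way that involves the mutated diagonal — more precisely, both diagonals lie inside a single arc $\llbracket\cdot,\cdot\rrbracket$ determined by consecutive elements of $\{i,j,k,l\}$ (or its complement), so they contribute equally (namely zero difference) to every relevant connection number, hence to every $a_{\bullet\bullet}$; the same symmetric argument handles the cases missing $\mathcal D$, $\mathcal E$, or $\mathcal C$. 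Since all remaining Plücker relations are the binomial ones on $4$-subsets and the ideal is generated by them (as established in the proof of Theorem~\ref{Thm:main}), controlling each $4$-subset suffices, and the theorem follows. I would keep the bookkeeping light by phrasing everything through the two connection numbers $C_{j,k-1}^{l,i-1}$ and $C_{i,j-1}^{k,l-1}$ and the single change of diagonal, rather than re-expanding A-degrees.
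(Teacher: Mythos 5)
Your strategy --- route everything through Propositions~\ref{Prop:treeA} and \ref{a-prop} and track only the single diagonal that changes under the mutation --- is genuinely different from the paper's proof, which computes the six tree degrees directly by counting internal edges along leaf-to-leaf paths, and it is in principle cleaner. But your key step contains an error that, as written, makes the argument collapse: you assert that the deleted diagonal $(a_r,b_r)$ contributes to $C_{j,k-1}^{l,i-1}$ and then invoke Proposition~\ref{a-prop}(1), whose hypothesis is precisely $C_{j,k-1}^{l,i-1}=0$; under your asserted hypothesis it is part (2) that applies, and it yields $P_{il}P_{jk}-P_{ik}P_{jl}\in I_{T_\Delta}$, the opposite of what the theorem claims for $\Delta$. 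The identification is in fact backwards. Test it on $n=4$, $\Delta_d=\{(1,3)\}$, $(i,j,k,l)=(1,2,3,4)$: there $\mathcal A=\{1\}$, $\mathcal D=\{2\}$, $\mathcal E=\{3\}$, $\mathcal C=\{4\}$, and the deleted diagonal $(1,3)$ joins $\llbracket i,j-1\rrbracket=\{1\}$ to $\llbracket k,l-1\rrbracket=\{3\}$, i.e.\ it contributes to $C_{i,j-1}^{k,l-1}$, not to $C_{j,k-1}^{l,i-1}$. In general, because leaf $m$ corresponds to the boundary edge $(m-1,m)$, the deleted diagonal joins the polygon vertex sitting between the leaf-arcs $\mathcal A$ and $\mathcal D$ to the one between $\mathcal E$ and $\mathcal C$, and these two vertices lie in $\llbracket i,j-1\rrbracket$ and $\llbracket k,l-1\rrbracket$ respectively; dually, the new diagonal contributes to $C_{j,k-1}^{l,i-1}$ in $\Delta'$. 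With the two roles swapped your argument does go through, and the uniqueness of the separating diagonal, which you flag as the expected obstacle, is then not needed at all: once one diagonal makes $C_{i,j-1}^{k,l-1}$ nonzero, non-crossing of diagonals forces $C_{j,k-1}^{l,i-1}=0$, which is exactly the hypothesis of Proposition~\ref{a-prop}(1).

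A further caution on the converse: it is not true that when some region is missed both the old and the new diagonal ``contribute zero difference to every relevant connection number.'' For instance, if $k,l\in\mathcal E$ (and, say, $i\in\mathcal A$, $j\in\mathcal D$), the new diagonal genuinely joins $\llbracket j,k-1\rrbracket$ to $\llbracket l,i-1\rrbracket$; what saves the statement is that $C_{j,k-1}^{l,i-1}$ was already nonzero in $\Delta$, since $\llbracket k,l-1\rrbracket$ lies strictly inside the span of the side of the mutation quadrilateral that cuts off $\mathcal E$, so no diagonal of either triangulation can realize $C_{i,j-1}^{k,l-1}\neq 0$. The case analysis should therefore be organized around showing that the vanishing pattern of the pair $\bigl(C_{i,j-1}^{k,l-1}, C_{j,k-1}^{l,i-1}\bigr)$ is unchanged, not around cancellation of individual contributions. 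With these two repairs the proposal gives a correct proof, by a route the paper does not take.
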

\begin{proof}
Let $\ba \longrightarrow \bb$ an inner edge and we adopt the notation of Figure~\ref{tree:mutation}. Let $i \in \mathcal{A}, j \in \mathcal{C}, k \in \mathcal{E}, l \in \mathcal{D}$ be leaves and denote
\begin{itemize}
\item the number of inner edges between $i$ and $\ba$ by $a$,
\item the number of inner edges between $j$ and $\bd$ by $d$,
\item the number of inner edges between $k$ and $\be$ by $e$,
\item the number of inner edges between $l$ and $\bc$ by $c$.
\end{itemize}
Then
$$
\begin{array}{ccc}
\operatorname{deg}_{T_{\Delta}}(P_{ij} P_{kl}) & = & (a + d +2) + ( e + c + 3),\\
\operatorname{deg}_{T_{\Delta}}(P_{ik} P_{jl}) & = & (a + e +2) + ( c+d+3),\\
\operatorname{deg}_{T_{\Delta}}(P_{il} P_{jk}) & = & (a + c+1) + ( d + e+2),\\
\operatorname{deg}_{T_{\Delta'}}(P_{ij} P_{kl}) & = & (a + d+1) + ( e + c+2),\\
\operatorname{deg}_{T_{\Delta'}}(P_{ik} P_{jl}) & = & (a + e+2) + ( d + c+3),\\
\operatorname{deg}_{T_{\Delta'}}(P_{il} P_{jk}) & = & (a + c +2) + ( d+e +3).\\
\end{array}
$$
The \textit{if}-statement of the theorem, namely the binomial relation is changed if the leaves are in pairwise distinct regions, follows from here since
\[
\operatorname{deg}_{T_{\Delta}}(P_{il} P_{jk})  < \operatorname{deg}_{T_{\Delta}}(P_{ij} P_{kl}) = \operatorname{deg}_{T_{\Delta}}(P_{ik} P_{jl})
\]
and
\[
\operatorname{deg}_{T_{\Delta'}}(P_{ij} P_{kl}) < \operatorname{deg}_{T_{\Delta'}}(P_{ik} P_{jl})  = \operatorname{deg}_{T_{\Delta'}}(P_{il} P_{jk}) .
\]
\bigskip

Now suppose that there is a subset $\mathcal{Y}$ such that  $\{i,j,k,l\} \cap \mathcal{Y} = \emptyset$. We have to consider several cases here, but here we will present the case $\mathcal{Y} = \mathcal{A}$ and $\{i,j,k,l\} \cap \mathcal{C} = \{k,l\}$ only. The other cases are similar.
\par
Let $\bc'$ be the first common parent of $k$ and $l$ in $\mathcal C$. And let $j \in \mathcal{E}$ and $i \in \mathcal{D}$, then we denote
\begin{itemize}
\item the number of inner edges between $i$ and $\bd$ by $d$.
\item the number of inner edges between $j$ and $\be$ by $e$,
\item the number of inner edges between $k$ and $\bc'$ by $h$,
\item the number of inner edges between $l$ and $\bc'$ by $f$,
\item the number of inner edges between $\ba$ and $\bc'$ by $c$.
\end{itemize}
Then
$$
\begin{array}{ccc}
\operatorname{deg}_{T_{\Delta}}(P_{il} P_{jk}) & = & (d + c + f + 2) + ( e + c + f + 2),\\
\operatorname{deg}_{T_{\Delta}}(P_{ik} P_{jl}) & = & (d + c + h + 2) + (e + c + f + 2),\\
\operatorname{deg}_{T_{\Delta}}(P_{ij} P_{kl}) & = & (d + e + 2) + (h + f),\\
\operatorname{deg}_{T_{\Delta'}}(P_{il} P_{jk}) & = & (d + c + f + 2) + ( e + c + h + 1),\\
\operatorname{deg}_{T_{\Delta'}}(P_{ik} P_{jl}) & = & (d + c + h + 2) + ( e + c + f + 1),\\
\operatorname{deg}_{T_{\Delta'}}(P_{ij} P_{kl}) & = & (d + e + 3) + ( h + f),\\
\end{array}
$$
(note, that $c$ is the distance between $\ba$ and $\bc'$ in $T_\Delta$ and not in $T_{\Delta'}$). So we see that in both cases
\[
\operatorname{deg}_{T_{\Delta}}(P_{ij} P_{kl})  < \operatorname{deg}_{T_{\Delta}}(P_{ik} P_{jl}) = \operatorname{deg}_{T_{\Delta}}(P_{il} P_{jk}),
\]
\[
\operatorname{deg}_{T_{\Delta'}}(P_{ij} P_{kl})  < \operatorname{deg}_{T_{\Delta'}}(P_{ik} P_{jl}) = \operatorname{deg}_{T_{\Delta'}}(P_{il} P_{jk}).
\]
Therefore the binomial relation is invariant under mutation.
\end{proof}
\begin{remark}
For every inner edge $\ba \longrightarrow \bb$, it is always possible to find leaves $i,j,k,l$ which are in pairwise distinct regions. This implies that there is no tree $T_{\Delta}$ such that $I_{T_{\Delta}}$ is invariant under any mutation on $\Delta$.
\end{remark}

\begin{remark}
Mutations on the plabic graphs (\cite{Pos,RW}) provide mutations on the plabic degrees, hence mutations on the binomial ideals obtained by considering plabic degrees. On the level of ideals, these mutations coincide with those coming from the tree mutations introduced in this section. This can be verified by combining Theorem~\ref{Thm:muta} and the A-model mutation introduced in \cite{RW}.
\end{remark}

\section{The case of $\Gr(3,6)$}\label{Gr(3,6)}
When $k\geq 3$, the weight vectors we obtain from plabic graphs
as in Definition~\ref{def:PD} could fail to give a complete characterization of the maximal cones of $\trop(\Gr(k,n))$. This is the case of $\Gr(3,6)$. Indeed,  by \cite{SS}, there are 7 isomorphism classes of  maximal cones of the tropical variety associated to $\Gr(3,6)$,
labelled by \[FFGG, EEEE, EEFF1, EEFF2, EEFG, EEEG, EEFG.\]
Only the last 5 of these isomorphism classes correspond to plabic graphs.

On the other hand, we are finding 6 isomorphism classes of initial ideals
corresponding to the weight vectors obtained from the 34 plabic graphs of $\Gr(3,6)$. The isomorphism class that is not corresponding to a full-dimensional cone in the tropical variety corresponds to an edge of the form $GG$.

Here are our findings in more detail. There are 34 reduced plabic graphs, and they give rise to the weight vectors in Table \ref{tab:matching}.
The first column indicates the frozen variables corresponding to a cluster
and determining a plabic graph;
the second column gives the corresponding weight vector
in the basis indexed by
\begin{multline*}
\{123, 124, 134, 234, 125, 135, 235,
145, 245, 345, 126, 136, 236, \\
146, 246, 346, 156, 236, 356, 456\};
\end{multline*}
the third column gives the corresponding isomorphism class of a cone in the tropical Grassmannian as described  in \cite[after Lemma 5.3]{SS} and, for $GG$ \cite[after Lemma 5.1]{SS}; the fourth column gives the permutation
($\sigma = a_1a_2a_3a_4a_5a_6$ where $\sigma(i) = a_i$) that moves the initial ideal
of the weight vector in column 2 to the initial ideal of the corresponding cone in the tropical Grassmannian using the sample vectors given in \cite{SS}.
The permutations were obtained using Macaulay 2 \cite{M2}, see \cite{M2c} for the code. The last column refers to the enumeration of cluster seeds from \cite{BCL}, where a combinatorial model for cluster algebras of type $D_4$ is studied. The 50 seeds are given by centrally symmetric pseudo-triangulations of the once punctured $8$-gon. In the paper they analyze symmetries among the cluster seeds and associate each seed to a isomorphism class of maximal cones in $\trop(\Gr(3,6))$. Although they consider all 50 cluster seeds, the outcome is similar to ours: they recover only six of the seven types of maximal cones, missing the cone of type $EEEE$.

\begin{table}[h]
\scalebox{0.8}{
\begin{tabular}{|l|l|l|l|l|}
\hline
 $p_{135}, p_{235}, p_{145}, p_{136}$&(0,0,1,1,1,1,1,1,1,4,1,1,1,1,1,4,4,4,5,5) & GG & 123456 & 49\\
\hline
$p_{124}, p_{246}, p_{346}, p_{256}$&(0,0,0,3,0,0,3,3,4,4,3,3,4,4,4,4,4,4,4,7) & GG & 134562& 23\\
\hline
$p_{125}, p_{235}, p_{245}, p_{256}$&(0,0,1,1,0,1,1,2,2,5,3,3,3,3,3,5,3,3,5,6) & EEFF1 & 124563&17\\
\hline
$p_{235}, p_{136}, p_{236}, p_{356}$&(0,0,0,0,0,0,0,1,1,2,0,0,0,1,1,2,2,2,3,5) & EEFF1 & 123456& 11\\
\hline
$p_{124}, p_{125}, p_{145}, p_{245}$&(0,0,2,3,1,2,3,2,3,6,4,4,4,4,4,6,5,5,6,6) &  EEFF1 & 135642& 15\\
\hline
$p_{124}, p_{125}, p_{245}, p_{256}$&(0,0,1,2,0,1,2,2,3,5,4,4,4,4,4,5,4,4,5,6) & EEFF1 & 134562& 27\\
\hline
$p_{235}, p_{236}, p_{256}, p_{356}$&(0,0,0,0,0,0,0,2,2,3,1,1,1,2,2,3,2,2,3,6) & EEFF1 & 123564& 29\\
\hline
$p_{136}, p_{236}, p_{346}, p_{356}$&(0,0,0,1,0,0,1,2,2,2,0,0,1,2,2,2,3,3,3,6) & EEFF1 & 312456&12\\
\hline
$p_{236}, p_{346}, p_{256}, p_{356}$&(0,0,0,1,0,0,1,3,3,3,1,1,2,3,3,3,3,3,3,7) & EEFF1 & 312564& 28\\
\hline
$p_{134}, p_{136}, p_{146}, p_{346}$&(0,0,0,3,1,1,3,2,3,3,1,1,3,2,3,3,5,5,5,6) & EEFF1 & 356421&9\\
\hline
$p_{134}, p_{145}, p_{136}, p_{146}$&(0,0,1,3,2,2,3,2,3,4,2,2,3,2,3,4,6,6,6,6) & EEFF1 & 345612&8\\
\hline
$p_{124}, p_{134}, p_{145}, p_{146}$&(0,0,1,4,2,2,4,2,4,5,3,3,4,3,4,5,6,6,6,6) & EEFF1 & 145632&32\\
\hline
$p_{125}, p_{235}, p_{145}, p_{245}$&(0,0,2,2,1,2,2,2,2,6,3,3,3,3,3,6,4,4,6,6) & EEFF1 & 125643&14\\
\hline
$p_{124}, p_{134}, p_{146}, p_{346}$&(0,0,0,4,1,1,4,2,4,4,2,2,4,3,4,4,5,5,5,6) & EEFF1 & 156432&31\\
\hline
$p_{125}, p_{235}, p_{256}, p_{356}$&(0,0,0,0,0,0,0,2,2,4,2,2,2,3,3,4,3,3,4,6) & EEFF2 & 125346&30\\
\hline
$p_{124}, p_{134}, p_{125}, p_{145}$&(0,0,1,3,1,1,3,1,3,5,3,3,4,3,4,5,5,5,5,5) & EEFF2 & 163452&33\\
\hline
$p_{134}, p_{136}, p_{346}, p_{356}$&(0,0,0,2,0,0,2,2,3,3,0,0,2,2,3,3,4,4,4,6) & EEFF2 & 512634&10\\
\hline
$p_{136}, p_{236}, p_{146}, p_{346}$&(0,0,0,2,1,1,2,2,2,2,1,1,2,2,2,2,4,4,4,6) & EEFF2 & 612534&13\\
\hline
$p_{124}, p_{145}, p_{245}, p_{146}$&(0,0,2,4,2,3,4,3,4,6,4,4,4,4,4,6,6,6,7,7) & EEFF2 & 153462&16\\
\hline
$p_{235}, p_{245}, p_{236}, p_{256}$ &(0,0,1,1,0,1,1,2,2,4,2,2,2,2,2,4,2,2,4,6) & EEFF2 & 126345&18\\
\hline
$p_{125}, p_{135}, p_{235}, p_{145}$&(0,0,1,1,1,1,1,1,1,5,2,2,2,2,2,5,4,4,5,5) & EFFG & 123456&43\\
\hline
$p_{135}, p_{235}, p_{136}, p_{356}$&(0,0,0,0,0,0,0,1,1,3,0,0,0,1,1,3,3,3,4,5) & EFFG & 345612 &45\\
\hline
$p_{236}, p_{246}, p_{346}, p_{256}$&(0,0,0,2,0,0,2,3,3,3,2,2,3,3,3,3,3,3,3,7) & EFFG & 612345&26\\
\hline
$p_{124}, p_{146}, p_{246}, p_{346}$&(0,0,0,4,1,1,4,3,4,4,3,3,4,4,4,4,5,5,5,7) & EFFG & 134562&21\\
\hline
$p_{134}, p_{135}, p_{145}, p_{136}$ &(0,0,1,2,1,1,2,1,2,4,1,1,2,1,2,4,5,5,5,5) & EFFG & 561234&47\\
\hline
$p_{124}, p_{245}, p_{246}, p_{256}$&(0,0,1,3,0,1,3,3,4,5,4,4,4,4,4,5,4,4,5,7) & EFFG & 356124 &24\\
\hline
$p_{245}, p_{236}, p_{146}, p_{246}$&(0,0,1,3,1,2,3,3,3,4,3,3,3,3,3,4,4,4,5,7) & EEEG & 265341&20\\
\hline
$p_{134}, p_{125}, p_{135}, p_{356}$&(0,0,0,1,0,0,1,1,2,4,1,1,2,2,3,4,4,4,4,5) & EEEG & 126534&48\\
\hline
$p_{125}, p_{135}, p_{235}, p_{356}$&(0,0,0,0,0,0,0,1,1,4,1,1,1,2,2,4,3,3,4,5) & EEFG & 342156 &42\\
\hline
$p_{134}, p_{125}, p_{135}, p_{145}$&(0,0,1,2,1,1,2,1,2,5,2,2,3,2,3,5,5,5,5,5) & EEFG & 563421&44\\
\hline
$p_{134}, p_{135}, p_{136}, p_{356}$&(0,0,0,1,0,0,1,1,2,3,0,0,1,1,2,3,4,4,4,5) & EEFG & 215634&46\\
\hline
$p_{245}, p_{236}, p_{246}, p_{256}$&(0,0,1,2,0,1,2,3,3,4,3,3,3,3,3,4,3,3,4,7) & EEFG & 156342&25\\
\hline
$p_{236}, p_{146}, p_{246}, p_{346}$&(0,0,0,3,1,1,3,3,3,3,2,2,3,3,3,3,4,4,4,7) & EEFG & 634215&19\\
\hline
$p_{124}, p_{245}, p_{146}, p_{246}$&(0,0,1,4,1,2,4,3,4,5,4,4,4,4,4,5,5,5,6,7) & EEFG & 321564&22\\
\hline
\end{tabular}}
\caption{Dictionary for the 34 plabic graphs.}\label{tab:matching}
\end{table}
\clearpage


\end{document}